\documentclass[12pt]{amsart}

\usepackage{amssymb,amsthm}

\usepackage{enumerate}

\makeatletter
\@namedef{subjclassname@2010}{%
  \textup{2010} Mathematics Subject Classification}
\makeatother

\newtheorem{thm}{Theorem}[section]

\newtheorem{lem}[thm]{Lemma}
\newtheorem{prop}[thm]{Proposition}
\newtheorem{ques}[thm]{Question}
\newtheorem{defi}[thm]{Definition}

\newtheorem*{Claim}{Claim}

\newtheorem*{mainthm}{Main Theorem}
\newtheorem*{thmm}{Theorem}

\theoremstyle{definition}

\newtheorem{rem}[thm]{Remark}

\newenvironment{demof}[1]
{\medbreak\noindent{\sc Proof of {#1} :}} {\hfill$\square$\medbreak}

\numberwithin{equation}{section}


\frenchspacing

\textwidth=13.5cm
\textheight=23cm
\parindent=16pt
\oddsidemargin=-0.5cm
\evensidemargin=-0.5cm
\topmargin=-0.5cm



\newcommand{\G}{\mathcal{G}}


\begin{document}
\baselineskip=17pt

\title[Jumps of entropy]{Jumps of entropy for $C^r$ interval maps}

\author[D. Burguet]{David Burguet}
\address{ LPMA - CNRS UMR 7599 \\ 
Universite Paris 6 \\
 75252 Paris Cedex 05 \\
  FRANCE}
\email{david.burguet@upmc.fr}

\date{}

\begin{abstract}
We study the jump of topological entropy for $C^r$ interval or circle maps. We prove in particular that the topological entropy is continuous at any $f\in C^r([0,1])$ with $h_{top}(f)>\frac{\log^+\|f'\|_\infty}{r}$. To this end we study the continuity of the entropy of the  Buzzi-Hofbauer diagrams associated to $C^r$ interval maps. 
\end{abstract}

\subjclass[2010]{Primary XXXX; Secondary YYYY}

\keywords{entropy, smooth interval maps, Buzzi-Hofbauer diagram}

\maketitle

\section{Introduction}

In this paper we study the upper semicontinuity of the (topological) entropy in the spaces $C^r([0,1])$ and $C^r(\mathbb{S}^1)$ of $C^r$ interval maps and circle maps   endowed with the usual $C^r$ topology where $r$ is a real number with $1\leq r \leq +\infty$. This problem of upper semicontinuity of the entropy has been investigated in several previous works in different settings \cite{Mis0}\cite{Mis1}\cite{Yoma}. Let us recall the main related results.

Lower semicontinuity of the entropy was proved by M.Misiurewicz and W.Szlenk \cite{Mis2}  for interval maps in the $C^0$ topology and by A.Katok for $C^{1+\alpha}$ surface diffeomorphisms in the $C^1$ topology \cite{Kat}. In both cases this follows from the characterization of  entropy by horseshoes which are persistent in the mentioned 
topologies. In dimension larger than two the entropy may not be  lower semicontinuous even in the $C^\infty$ topology \cite{Mis0}.

 In \cite{Mis1} M.Misiurewicz investigated upper semicontinuity of the entropy for continuous  piecewise monotone maps of the interval.  Let $\mathcal{M}^r_k([0,1])$ , with $r=0$ or $1$, be the set of $C^r$ interval maps $f$, which admit a partition of $[0,1]$ in $k$ intervals such that  $f$ is weakly monotone on each element of this partition. 

We say $x\in [0,1]$ is a turning point of an interval map $f$ when there exist $0\leq a<b\leq x\leq c<d\leq 1$ such that $f$ is constant on $[b,c]$ and strictly monotone 
both on $[a,b]$ and $[c,d]$ but in the opposite sense.
 
M.Misiurewicz proved upper semicontinuity (and thus continuity)  of the entropy in $\mathcal{M}^0_2([0,1])$ in the $C^0$ topology  at all maps at which it is positive. For all $k$ he  also gave a complete description of the possible jumps of entropy  in $\mathcal{M}^0_k([0,1])$ at  any $f\in \mathcal{M}^0_k([0,1])\setminus \bigcup_{l=1,...,k-1}\mathcal{M}^0_l([0,1])$:

\begin{eqnarray*}\limsup_{g\stackrel{C^0}{\rightarrow} f, \ g\in \mathcal{M}^0_k([0,1]}h_{top}(g)= \max\left(h_{top}(f),\beta(f)\right),
\end{eqnarray*}

with
\begin{eqnarray*}\beta(f)&:=\max\{ \frac{p}{q}\log 2, & \text{there exists a periodic point of $f$ of} \\
& & \text{ period $q$ with  $p(\leq q)$ turning points in its orbit}\}.
\end{eqnarray*} 
 
 Moreover M.Misiurewicz  proved  continuity of the entropy for the $C^1$ topology  for $C^1$ piecewise monotone maps with a uniform number of pieces \cite{Mis4}, i.e. 
 for any positive integer $k$ and for any $f\in  \mathcal{M}^1_k([0,1])$ we have:
 
\begin{eqnarray}\label{mi95}\limsup_{g\stackrel{C^1}{\rightarrow} f, \ g\in \mathcal{M}^1_k([0,1]}h_{top}(g)= h_{top}(f),
\end{eqnarray}

  Axiom A interval maps are open and dense in $C^r([0,1])$  with $r\geq 1$ \cite{Koz}. 
 We recall that a $C^1$ interval map $f$ is said to be Axiom A when all periodic points are hyperbolic and  when letting $B( f )$ denote the union of basins of attracting periodic points
of $f$, then $[0, 1]\setminus B( f )$ is a hyperbolic set, that is, there
are constants $C > 0$ and $\lambda > 1$, such that $|(f^k)'(x)|\geq C\lambda^k$
holds for all $x \in [0, 1]\setminus B( f )$ and $k\in \mathbb{N}$. By structural stability the entropy is locally constant, hence continuous,  on the set of Axiom A interval maps. 

For $r>1$ the set $D^r([0,1])$ of $C^r$ interval maps with no critical point flat up to order $r$ is also an open and dense set in $C^r([0,1])$ and neither contains the set of Axiom A maps nor is contained in it. The entropy is also continuous on $D^r([0,1])$.  This result is due to Bowen \cite{bo} and  Misiurewicz-Szlenk \cite{Mis2} for $r=2$ and to K.Iwai \cite{Iw}
 for larger $r$ (for the latter the proof is  based on a variant of the kneading theory of Milnor and Thurston). We see in the appendix that it is in fact a direct consequence of Misiurewicz's result (\ref{mi95}).

Upper semicontinuity was also established by Y.Yomdin for the $C^\infty$ topology on any compact manifold $M$ \cite{Yoma}. In fact he bounds the default of upper semicontinuity of the entropy for $C^r$ maps, $1\leq r \leq +\infty$,  as follows :
\begin{eqnarray}\label{yomd}
\limsup_{g\stackrel{C^r}{\rightarrow} f}h_{top}(g)\leq h_{top}(f)+\frac{d \cdot R(f)}{r},
\end{eqnarray}
where $d$ is the dimension of $M$ and $R(f):=\lim_n\frac{\log^+ \|Df^n\|_{\infty}}{n}$ (remark that $R$ is upper semicontinuous in the $C^1$ topology as the limit of a subadditive sequence of continuous functions, i.e. $\limsup_{g\stackrel{C^1}{\rightarrow} f}R(g)\leq R(f)$). 

Earlier M.Misiurewicz has proved that upper semicontinuity of the entropy fails for diffeomorphisms in the $C^r$ topology with finite $r$  in dimension larger than or equal to three \cite{Mis3}. For interval maps \cite{Buz1} and for surface diffeomorphisms \cite{Buz0} the only known  $C^r$ examples at which the entropy is not upper semicontinuous all satisfy $h_{top}(f)<\frac{R(f)}{r}$. 
We prove in this paper that it is always the case in dimension one.

\begin{mainthm}\nonumber
Let  $f$ be a $C^r$ interval or circle map with   $1\leq r\leq +\infty$. Then 

$$\limsup_{g \stackrel{C^r}{\rightarrow} f}h_{top}(g)\leq \max\left(h_{top}(f),\frac{R(f)}{r}\right).$$
\end{mainthm}

In fact we will prove the stronger statement where the $\limsup$ is taken over $g$ going to $f$ in the $C^1$ topology and staying in a $C^r$ bounded set (Yomdin's inequality (\ref{yomd}) also holds true in this setting). 

Obviously the statement of the Main Theorem for arbitrarily large $r$ implies the same for $r=+\infty$ and we recover in this last case the upper semicontinuity of the  entropy in the $C^\infty$ topology proved by Y.Yomdin. Note also that the above theorem is trivial for $r=1$ as $h_{top}(f)$ is always less than or equal to $d \cdot R(f)$ for any $C^1$ dynamical system on a compact manifold of dimension $d$.

We conjecture that the Main Theorem should also hold true for surface diffeomorphisms.  \\

Let $f$ be a $C^r$ interval map, $1< r<+\infty$, and let  $p$ be a repelling periodic point with period $T$.   Here $f$ may be noninvertible and the unstable manifold $W^u(\mathcal{O}(p))$ of the orbit $\mathcal{O}(p)$ of  $p$ is then defined as the set of points $x$,  such that there exists a infinite backward orbit $(x_k)_{k\leq 0}$ through $x$, i.e. $x_{k+1}=f(x_k)$ for any $k<0$ and $x=x_l$ for some $l\leq 0$, such that $x_{kT}$ goes to $p$ when $k$ goes to $-\infty$. We say that $f$ has a 
 homoclinic tangency of order $r$ at $p$ if there exists a critical point $c\in [0,1]$  flat up  to order $r$, i.e. $f(x)-f(c)=o\left( (x-c)^r \right)$,  such that $c\in W^u(\mathcal{O}(p))$ and $f^k(c)=p$ for some $k>0$.
In dimension one the stable manifold at a repelling periodic point is in general zero dimensional and a homoclinic tangency may be geometrically interpreted as a point of intersection of the stable and unstable manifold at which the graph of the interval map is tangent to the horizontal axis.   Observe that a homoclinic tangency of order $r$ is of order $s$ for any  $s\leq r$.
  
\begin{prop}\label{z}
Let $f$ be a $C^r$ interval map, $1< r<+\infty$,  with an homoclinic tangency of order $r$ at a repelling periodic point $p$. Then 

$$\limsup_{g \stackrel{C^r}{\rightarrow} f}h_{top}(g)\geq \max\left(h_{top}(f),\frac{\lambda(p)}{r}\right),$$ 
where $\lambda(p)$ is the Lyapunov exponent at $p$. \\

When moreover $\lambda(p)=R(f)$, we have then 

 $$\limsup_{g \stackrel{C^r}{\rightarrow} f}h_{top}(g)= \max\left(h_{top}(f),\frac{\lambda(p)}{r}\right).$$
\end{prop}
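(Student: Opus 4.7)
The inequality $\limsup_{g\to f}h_{top}(g)\geq h_{top}(f)$ is immediate from the lower semicontinuity of $h_{top}$ on interval maps in the $C^0$ topology due to Misiurewicz--Szlenk, so the heart of the matter is to prove $\limsup\geq \lambda(p)/r$. I would do this by an explicit construction of horseshoes for small $C^r$ perturbations supported near $c$: for each $\epsilon>0$, one builds $g$ with $\|g-f\|_{C^r}\leq\epsilon$ and $h_{top}(g)$ as close to $\lambda(p)/r$ as we wish. The second assertion (equality when $\lambda(p)=R(f)$) then follows at once from this lower bound together with the Main Theorem, which in that case yields $\limsup h_{top}(g)\leq\max(h_{top}(f),R(f)/r)=\max(h_{top}(f),\lambda(p)/r)$.

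For the setup, write $\gamma=T\lambda(p)$ so that $|(f^T)'(p)|=e^\gamma$, let $L>0$ be the absolute derivative of $f^{k-1}$ along the diffeomorphic segment from $f(c)$ to $p$, and let $(c_{-iT})_{i\geq 0}$ be a backward orbit of $c$ with $c_{-iT}\to p$. Fix $\epsilon>0$ and a small $\delta>0$ for which $V:=[c-\delta,c+\delta]$ contains neither $p$ nor any $c_{-iT}$ with $i\geq 1$. For a parameter $\rho>0$ to be sent to $0$, set
\[
\delta_\rho:=\bigl(\rho/(L\epsilon)\bigr)^{1/r},\qquad A_\rho:=\rho/L,\qquad M:=\lfloor \delta/\delta_\rho\rfloor,
\]
choose $M$ pairwise disjoint subintervals $V_1,\ldots,V_M\subset V$ of length $\delta_\rho$, and on each $V_j$ place a smooth ``sine-shaped'' bump $h_j$ supported in $V_j$ whose maximum is $+A_\rho$ and whose minimum is $-A_\rho$; a direct scaling computation gives $\|h_j\|_{C^r}\lesssim A_\rho/\delta_\rho^{r}=\epsilon$. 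Since the supports of the $h_j$ are disjoint, $h:=\sum_j h_j$ satisfies $\|g-f\|_{C^r}\lesssim\epsilon$ for $g:=f+h$.

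Because $f(x)-f(c)=o\bigl((x-c)^r\bigr)$, the variation of $f$ on each $V_j$ is $o(\delta_\rho^{r})$, which for $\rho$ small is negligible in front of $A_\rho\asymp \epsilon\,\delta_\rho^{r}$; so each $g|_{V_j}$ contains a monotone sub-branch sending a subinterval of $V_j$ onto $[f(c)-A_\rho,f(c)+A_\rho]$. Applying the diffeomorphism $f^{k-1}$, this branch is mapped by $g^k$ onto an interval of half-length $\asymp L A_\rho=\rho$ around $p$, hence covers $I:=[p-\rho,p+\rho]$. Now take $n$ minimal with $c_{-nT}$ well inside $I$ and $2\delta\,e^{-n\gamma}\leq \rho$, so $n\gamma=\log(2\delta/\rho)+O(1)$. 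The local branch of $f^{-nT}$ carries $V$ diffeomorphically to a subinterval $\widetilde V\subset I$ of length $\asymp 2\delta\,e^{-n\gamma}$ around $c_{-nT}$; by the choice of $V$, the forward orbit of $\widetilde V$ avoids $V$ until the $nT$-th iterate, so $g^{nT}=f^{nT}$ on $\widetilde V$. Pulling the $M$ monotone sub-branches back through $f^{-nT}$ yields $M$ pairwise disjoint subintervals of $I$ each mapped onto $I$ by $g^{nT+k}$, i.e.\ an $M$-symbol horseshoe for $g^{nT+k}$. Hence
\[
h_{top}(g)\;\geq\;\frac{\log M}{nT+k}\;\sim\;\lambda(p)\cdot\frac{(1/r)|\log\rho|+O(1)}{|\log\rho|+O(1)}\;\longrightarrow\;\frac{\lambda(p)}{r}
\]
as $\rho\to 0$ with $\delta,\epsilon$ fixed, and a diagonal extraction in $(\epsilon,\rho)$ produces $g\to f$ in $C^r$ with $h_{top}(g)\to\lambda(p)/r$.

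The main obstacle is to calibrate the $C^r$-size of the perturbation against the modulus of flatness of $f$ at $c$, which is given only by the $o$-condition and may decay arbitrarily slowly in $\delta$. The decisive idea is to use $M$ \emph{disjoint} bumps of width $\delta_\rho$ instead of a single perturbation oscillating $M$ times on $V$: the $C^r$-cost per disjoint bump is $A_\rho/\delta_\rho^{r}\sim\epsilon$ independently of $M$, so the flatness constraint collapses to $o(\delta_\rho^{r})\ll\epsilon\,\delta_\rho^{r}$, which holds for $\rho$ small no matter how slowly the $o$-rate is. The remaining tasks---bounded distortion of $f^{-nT}$ along the branch accumulating on $p$ and the check that the forward orbit of $\widetilde V$ does not prematurely re-enter $V$---are standard consequences of the hyperbolicity of $p$ and of the choice of $V$.
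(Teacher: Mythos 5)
Your overall architecture coincides with the paper's: build an $M$-branch horseshoe along the homoclinic loop (perturbation near $c$, then $f^{k-1}$ into a $\rho$-neighbourhood of $p$, then the expanding return of length $nT\sim|\log\rho|/\lambda(p)$ back to $V$), calibrate $\log M\sim\frac1r|\log\rho|$ against the $C^r$-size of the perturbation, and deduce the equality case from the Main Theorem. The reduction of the inequality $\limsup\geq h_{top}(f)$ to lower semicontinuity and the final arithmetic are fine. However, there is a genuine gap exactly at the step you single out as the decisive idea, namely the use of $M$ disjoint bumps \emph{added} to $f$.

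The claim that ``the variation of $f$ on each $V_j$ is $o(\delta_\rho^r)$'' is false. The flatness hypothesis gives $|f(x)-f(c)|=o(|x-c|^r)$, and a bump $V_j$ sits at distance up to $\delta$ from $c$; hence all one knows on $V_j$ is $\sup_{V_j}|f-f(c)|=o(\delta^r)$ (you have confused the width $\delta_\rho$ of $V_j$ with its distance to $c$). Since $\delta$ is fixed while $\rho\to 0$, the bump height $A_\rho=\epsilon\,\delta_\rho^r$ tends to $0$ whereas $\sup_{V}|f-f(c)|$ is a fixed positive quantity; so for all but the $O(1)$ bumps lying within distance $O(\delta_\rho)$ of $c$, the graph of $f$ on $V_j$ lies entirely on one side of $f(c)$ at distance $\gg A_\rho$, and $g=f+h_j$ does not even take the value $f(c)$ on $V_j$, let alone produce a monotone branch onto $[f(c)-A_\rho,f(c)+A_\rho]$. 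Counting only the usable bumps gives a number governed by the (possibly arbitrarily slow) flatness modulus $\eta$ in $|f(x)-f(c)|\leq\eta(|x-c|)|x-c|^r$; for instance $\eta(t)=1/|\log t|$ yields $\log M_{\mathrm{good}}\sim\frac1r\log|\log\rho|$, so the horseshoe entropy tends to $0$, not to $\lambda(p)/r$. Shrinking $\delta$ with $\rho$ does not help: the requirements $\eta(\delta)\delta^r\ll\epsilon\,\delta_\rho^r$ (so that all $M=\delta/\delta_\rho$ bumps work) and $\log(\delta/\delta_\rho)\sim\frac1r|\log\rho|$ are incompatible when $\eta$ decays slowly. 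The repair is what the paper actually does: do not add bumps to $f$, but \emph{replace} $f$ on $]c-\delta,c+\delta[$ by the constant $f(c)$ plus a single sinusoidal $a\sin(Nx/\delta)$. The flattening step costs $o(1)$ in $C^r$ as $\delta\to 0$ precisely because $c$ is flat up to order $r$ (each derivative of $f-f(c)$ of order $j\leq r$ is $o(\delta^{r-j})$ on $V$), and then every one of the $N$ folds genuinely maps onto $[f(c)-a,f(c)+a]$, the $C^r$ cost being $a(N/\delta)^r$, which is sent to $0$.
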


Note that $R(f)$ is the maximum of the Lyapunov exponents  of  all invariant measures (and zero). Indeed, firstly  the Lyapunov exponent $\lambda(\mu)$ of an $f$-invariant measure $\mu$ satisfies $\lambda(\mu)=\int \log| f'|d\mu=\int \lim_n\frac{\log |(f^n)'|}{n} d\mu\leq R(f)$. For the converse inequality we may assume $R(f)>0$. If $x_n$ is such that $|(f^n)'|(x_n)=\|(f^n)'\|_\infty>1$, we let $\nu_n$ be the atomic measure given by $\nu_n:=\frac{1}{n}\sum_{k=0}^{n-1}\delta_{f^kx_n}$. Clearly we have $\int \log | f'|d\nu_n=\frac{\log |(f^n)'|(x_n)}{n}$ and  then  any  limit $\nu$ of  $(\nu_n)_n$ in the weak-$*$ topology is $f$-invariant  and satisfies by upper semicontinuity  $\int \log |f'|d\nu\geq \lim_n\int\log |f'|d\nu_n=R(f)$.

\begin{ques}
Let $f$ be a discontinuity point of the entropy in $C^r([0,1])$ with $1< r<+\infty$. 
Does there exist $f_n\in C^r([0,1])$  with an homoclinic tangency of order $r$ at a repelling periodic point $p_{f_n}$ and going to $f$ in the $C^r$ topology  when $n$ goes to infinity? Do we have moreover
 $$\limsup_{g \stackrel{C^r}{\rightarrow} f}h_{top}(g)=\limsup_n\frac{\lambda(p_{f_n})}{r}?$$
\end{ques}

The proof of the Main Theorem is based on the study of the Buzzi-Hofbauer diagram and its behaviour under  $C^r$ perturbations.  This diagram is introduced in Section 3. In Section 2 we investigate the upper semicontinuity of the entropy of topological Markov shifts with countable state sets. The Main Theorem follows then by  applying the previous abstract framework to the Buzzi-Hofbauer diagram and  by following the strategy developed in \cite{BuRu}\cite{Bur}  (to prove existence and finiteness of measures of maximal entropy for $C^r$ interval maps $f$ with $h_{top}(f)>\frac{R(f)}{r}$).
 Finally we sketch the proof of Proposition \ref{z} in the last section, which is based on a classical construction of arbitrarily small horseshoes near a homoclinic tangency by $C^r$ perturbations.

\section{Continuity of the entropy of topological Markov shifts with countable state sets}

In this section we introduce a notion of convergence for countable Markov shifts and analyze the case of non upper semicontinuity of Gurevic entropy. We first recall standard terminology. \\

Let  $\mathcal{G}$ be an oriented graph with a countable set of vertices $V(\G)$. For $u,v\in V(\mathcal{G})$, we use the notation $u\rightarrow v$ when there is an oriented arrow from $u$ to $v$. A \textbf{closed path} at a vertex $u$  is a sequence of vertices $(u_1,...,u_{p+1})$ with $u_1=u_{p+1}=u$ and with $u_i\rightarrow u_{i+1}$ for $i=1,...,p$. The integer $p$ is called the \textbf{length} of the closed path.  The \textbf{period $p(\G)$} of the graph $\G$ is the greatest common divisor of the lengths of its closed paths. A closed path $(u_1,...,u_{p+1})$ at $u$ is said to be a \textbf{first return}  at $u$ if $u_i\neq u$ for $i\neq 1,p+1$. Any closed  path $\gamma$ at $u$ is a concatenation of first returns  $(\gamma_i)_{i=1,...,j}$ at $u$ and we denote it as follows :  \begin{eqnarray}\label{deco}\gamma:=\gamma_1*\gamma_2*...\gamma_j.\end{eqnarray} This  means that if $\gamma=(u_1,u_2,...,u_{p+1})$  there exist $1=k_1<...<k_i<...k_{j+1}=p+1$ such that 
$\gamma_i=(u_{k_i},..,u_{k_{i+1}})$ are first returns at $u$. A graph $\G$ is said to be \textbf{admissible} when for any $M\in\mathbb{N}$ and for any vertex $u\in V(\mathcal{G})$  the number of closed paths at $u$  of length $M$ is finite.\\

We  let  $\Delta_p(\mathcal{G})$ (resp. $\Delta_p^u(\mathcal{G})$) be  the set of closed paths of length $p$  in $\mathcal{G}$ (resp. at $u$). For any positive integer $M$ and for any  $u\in V(\G)$ we let $\Delta^{u}_{p,M}(\G)$ be the set of $\gamma\in \Delta^{u}_p(\G)$ such that   all first returns $\gamma_i$  at $u$ appearing in the decomposition  (\ref{deco}) of $\gamma$ in $\G$ have  length less than or equal to $M$.\\

Consider $\Sigma(\mathcal{G}):=\{(v_n)_{n}\in \mathcal{G}^{\mathbb{Z}}, \ \forall n\in \mathbb{Z},v_n\rightarrow v_{n+1} \}$. The Markov shift on  $\mathcal{G}$ is the shift $\sigma((v_n)_{n})=(v_{n+1})_{n}$ on $\Sigma(\mathcal{G})$. Obviously there is a  correspondence between $p$-periodic points of $\Sigma(\G)$ and closed paths of length $p$ in $\G$ : to any closed path $\gamma=(u_1,...,u_{p+1})$ in $\G$ of length $p$ corresponds the $p$-periodic point $\tilde{\gamma}=(v_n)_n$ of $\Sigma(\G)$ with $v_n=u_{n  (\textrm{mod}\  p) +1}$ for all $n$.

 If $F$ is a subset  of vertices of the graph $\G$ we write $[F]:=\left\{(v_n)_{n}\in \Sigma(\G) , \ v_0\in F\right\}$. We also let $\mathcal{M}(\Sigma(\G))$ be the set of invariant Borel probability measures on $\Sigma(\G)$.

\subsection{A notion of convergence}
We consider a set $E$ and a  family of subsets  $\underline{E}=(E_L)_{L \in \mathcal{L} }$  with $E=\bigcup_L E_L$.  

A family  of  oriented admissible graphs $(\G_i)_{i\in I}$ with  vertices in $E$ is said to be \textbf{uniform} with respect to $\underline{E}$ when for any $L\in\mathcal{L}$ we have
\begin{eqnarray}\label{unif}
 \sup_{i\in I }\sharp V\left(\G_i\right)\cap E_L<+\infty.
 \end{eqnarray}

\begin{defi}\label{defii}
A sequence  of graphs $(\G_n)_{n\in \mathbb{N}}$ uniform with respect to $\underline{E}$ \textbf{converges} to a graph $\G$ when $ \forall L \ \forall M \ \exists n_0 \ \forall n>n_0$ we have

\begin{center}
$ \forall u^n\in V(\G_n)\cap E_L \ \exists u\in V(\G)$ such that  
$$ \forall p, \ \sharp\Delta_{p,M}^{u^n}(\G_n)\leq \sharp\Delta_p^{u}(\G).$$
\end{center} 
\end{defi}


 Clearly the sequence of graphs $(\G_n)_{n\in \mathbb{N}}$ do not determine $\G$ uniquely. For example, if we add some edges to $\G$, then $(\G_n)_n$ is also converging to the resulting graph.  


\begin{defi}
Let $(\G_n)_{n\in \mathbb{N}}$ be a sequence of graphs uniform with respect to $\underline{E}$.   We say \textbf{ $(\xi_n)_{n\in \mathbb{N}}\in \prod_{n\in \mathbb{N}} \mathcal{M}(\Sigma(\G_n))$ goes to infinity} when for any $L\in \mathcal{L}$  we have $\lim_n\xi_n([E_L])=0$. 
\end{defi}

As   for any $L$ we have $\sup_n \sharp V(\G_n)\cap E_{L}<+\infty$ by Inequality (\ref{unif}), the sequence $(\xi_n)_{n\in \mathbb{N}}$ goes to infinity when for any $L$  the sequence   $\left(\sup_{u^n\in V(\G_n)\cap E_{L}} \xi_n([\{u^n\}])\right)_n$ goes to zero.

\subsection{Entropy and Measure of maximal entropy}
The shift space  $\Sigma(\mathcal{G})$ is  a priori not compact. Following Gurevic we define the entropy  $h(\mathcal{G})$ as the supremum of $h(\sigma,\xi)$ over all $\sigma$-invariant probability measures $\xi$. A $\sigma$-invariant measure $\xi$ is said to be maximal if $h(\sigma,\xi)=h(\mathcal{G})$. Such a measure does not always exist.  In the following we consider a  converging sequence $(\G_n)_n$ of graphs with  $\lim_n h(\G_n)>h(\G)$. We do not assume the graphs $(\G_n)_n$ or $\G$  admit maximal  measures.

We will use the following theorem of Gurevic which enables us to  work with finite connected\footnote{A graph is said to be connected when any pair of vertices may be joined by a path.}  graphs.

\begin{thm} (Corollary 1.7 \label{finit} in \cite{ggur})
Let $\G$ be an  oriented graph with  a countable set of vertices, then 

$$h(\G)=\sup_{\G_0}h(\G_0),$$

where $\G_0$ ranges  over all finite connected subgraphs of $\G$.
\end{thm}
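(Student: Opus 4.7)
The plan is to prove the two inequalities of $h(\G) = \sup_{\G_0} h(\G_0)$ separately. The easy one, $\sup_{\G_0} h(\G_0) \leq h(\G)$, is immediate: for a finite connected subgraph $\G_0 \subset \G$, the shift $\Sigma(\G_0)$ embeds as a closed $\sigma$-invariant subset of $\Sigma(\G)$, so every $\sigma$-invariant probability measure on $\Sigma(\G_0)$ lifts to one on $\Sigma(\G)$ with the same metric entropy; taking suprema gives $h(\G_0) \leq h(\G)$.

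For the reverse inequality, by ergodic decomposition and the fact that every ergodic invariant measure is carried by a single strongly connected component of $\G$, I may assume $\G$ itself is strongly connected. I would then invoke Gurevic's formula, which for such $\G$ and any fixed vertex $u \in V(\G)$ asserts
\begin{equation*}
h(\G) = \limsup_{p \to \infty} \frac{1}{p} \log \sharp \Delta_p^u(\G).
\end{equation*}
For each $M$, introduce the finite subgraph $\G_M \subset \G$ spanned by $u$ together with all vertices appearing on first returns at $u$ of length at most $M$; admissibility of $\G$ makes $\G_M$ finite. The unique decomposition (\ref{deco}) of a closed path into first returns gives $\sharp \Delta_p^u(\G_M) = \sharp \Delta_{p,M}^u(\G)$, and if $R_p^u$ denotes the number of first returns of length $p$ at $u$ in $\G$, the generating-function identities
\begin{equation*}
\sum_{p \geq 0} \sharp \Delta_p^u(\G)\, z^p = \frac{1}{1 - \sum_{p \geq 1} R_p^u z^p}, \qquad \sum_{p \geq 0} \sharp \Delta_p^u(\G_M)\, z^p = \frac{1}{1 - \sum_{p=1}^M R_p^u z^p}
\end{equation*}
allow one to read off each entropy as the negative logarithm of the radius of convergence of the corresponding series.

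The crux is then to show $h(\G_M) \to h(\G)$ as $M \to \infty$, equivalently that the radii of convergence of the truncated generating functions tend to that of the full one. I would split on the behaviour of $R(z) := \sum_{p \geq 1} R_p^u z^p$ at its own radius of convergence $\rho$. If $R(\rho) \geq 1$, the equation $R_M(z_M) = 1$ has for $M$ large a unique solution $z_M \in (0, \rho)$ which decreases monotonically to the solution $z_\infty$ of $R(z_\infty) = 1$, and $h(\G_M) = -\log z_M \nearrow -\log z_\infty = h(\G)$. If instead $R(\rho) < 1$, then $h(\G) = -\log \rho$, and using $R_M \nearrow R$ together with $\rho = \bigl(\limsup_p (R_p^u)^{1/p}\bigr)^{-1}$ one again obtains that the radii converge.

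The main obstacle is precisely this radius-of-convergence comparison; it is where Gurevic's analysis enters and where the hypothesis of admissibility is used in an essential way, since without it the truncated subgraph $\G_M$ could be infinite and the whole scheme would collapse. Once $h(\G_M) \to h(\G)$ is established, combining with the trivial direction finishes the proof.
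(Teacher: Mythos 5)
The first thing to say is that the paper does not prove this statement at all: it is quoted verbatim as Corollary 1.7 of Gurevic's 1969 paper, so there is no internal proof to measure your attempt against. Your argument is essentially the classical renewal-theoretic proof of Vere-Jones and Gurevic, and its architecture is sound: the easy inequality by viewing $\Sigma(\G_0)$ as a closed invariant subset of $\Sigma(\G)$; the reduction to a strongly connected component via ergodic decomposition; and the comparison of the first-return generating function $R(z)=\sum_{p\ge 1}R^u_p z^p$ with its truncations $R_M$. The analytic core is correct once phrased as $e^{-h(\G)}=\sup\{z>0:\ R(z)<1\}$ and likewise for the truncated series; since $R_M\nearrow R$ pointwise, these suprema converge, and your two cases cover the possibilities.

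Three points need repair. (i) The identity $\sharp\Delta^u_p(\G_M)=\sharp\Delta^u_{p,M}(\G)$ is false in general: the edges collected from distinct short first returns can be reassembled inside $\G_M$ into first returns at $u$ of length greater than $M$, so only $\sharp\Delta^u_p(\G_M)\geq\sharp\Delta^u_{p,M}(\G)$ holds. This is harmless, since you only need a lower bound on $h(\G_M)$, but the generating-function identity must be stated for $\sharp\Delta^u_{p,M}(\G)$ rather than for $\G_M$. (ii) The theorem as stated carries no admissibility hypothesis, and your closing remark that the scheme would ``collapse'' without it misreads the statement: if $\Delta^u_M(\G)$ is infinite for some $u$ and $M$, then finite connected subgraphs through $u$ containing $N$ of these loops already have entropy at least $\frac{\log N}{M}$, so both sides of the claimed equality are $+\infty$ and the result is trivial; admissibility is the only nontrivial case, not a hypothesis. (iii) You invoke Theorem \ref{new}, a result of the same depth cited from the same sources, and in Gurevic's development the loop-growth formula and the finite-approximation property are established together; to avoid circularity you must check that the proof of $h(\G)\leq\limsup_p\frac{1}{p}\log\sharp\Delta^u_p(\G)$ you have in mind (say, via Abramov's formula for the system induced on $[\{u\}]$ and a direct cylinder count) does not itself pass through approximation by finite subgraphs.
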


A finite connected graph admits a unique measure of maximal entropy (the so called Parry measure). We recall now the characterization of Bowen \cite{Bow} of the Parry measure :  periodic orbits equidistribute along this measure.

\begin{thm}\label{Bowen}\cite{Bow}
Let $\G_0$ be a finite connected graph with $h(\G_0)>0$. Then  periodic points equidistribuate along the unique maximal measure $\mu$ of $(\Sigma(\G_0),\sigma)$, i.e. 

$$\frac{1}{\sharp \Delta_p(\mathcal{G}_0)}\sum_{\gamma\in \Delta_p(\mathcal{G}_0)}\delta_{\tilde{\gamma}}\xrightarrow{p, \ p(\G_0) \mid p}\mu,$$
where $\delta_{\tilde{\gamma}}$ denotes the Dirac measure at the periodic point $\tilde{\gamma}\in \Sigma(\G_0)$.

\end{thm}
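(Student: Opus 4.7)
The plan is to reduce to a standard Perron--Frobenius argument for the $0/1$ transition matrix $A$ of the finite connected graph $\G_0$, indexed by $V(\G_0)$. Since $(A^p)_{uv}$ counts paths of length $p$ from $u$ to $v$ in $\G_0$, we have $\sharp\Delta_p(\G_0)=\operatorname{tr}(A^p)$; and a closed path of length $p$ in $\G_0$ whose first $k+1$ vertices agree with an admissible word $(i_0,\ldots,i_k)$ is determined by its tail, which is a path of length $p-k$ from $i_k$ back to $i_0$, of which there are exactly $(A^{p-k})_{i_k,i_0}$.

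First I would treat the aperiodic case $p(\G_0)=1$. Connectedness gives irreducibility, so $A$ is a primitive nonnegative matrix; Perron--Frobenius then furnishes a simple top eigenvalue $\lambda>0$, strictly dominating all other eigenvalues in modulus, together with strictly positive right and left eigenvectors $w$ and $v$ normalized by $\sum_i v_iw_i=1$, and
$$\lambda^{-n}A^n\xrightarrow[n\to\infty]{}wv^{T}.$$
For finite SFTs it is classical (Parry) that the Gurevic entropy equals the log of the spectral radius of the transition matrix, hence $\lambda=e^{h(\G_0)}>1$.

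The heart of the argument is then a cylinder computation. For an admissible word $(i_0,\ldots,i_k)$ and the corresponding cylinder $C=[i_0,\ldots,i_k]\subset\Sigma(\G_0)$, the preliminary remark gives
$$\sharp\{\gamma\in\Delta_p(\G_0):\tilde{\gamma}\in C\}=(A^{p-k})_{i_k,i_0}.$$
Dividing by $\operatorname{tr}(A^p)$ and applying the Perron--Frobenius asymptotics yields
$$\frac{(A^{p-k})_{i_k,i_0}}{\operatorname{tr}(A^p)}\xrightarrow[p\to\infty]{}\frac{w_{i_k}v_{i_0}\lambda^{-k}}{\sum_j v_jw_j}=v_{i_0}w_{i_k}\lambda^{-k}.$$
A direct telescoping then checks that the right-hand side is precisely $\mu(C)$: the Parry measure $\mu$ is the stationary Markov measure with stationary distribution $\pi_i=v_iw_i$ and transition probabilities $p_{ij}=A_{ij}w_j/(\lambda w_i)$, so that $\pi_{i_0}p_{i_0i_1}\cdots p_{i_{k-1}i_k}=v_{i_0}w_{i_k}\lambda^{-k}$. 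Since the cylinders form a $\pi$-system generating the Borel $\sigma$-algebra on the compact space $\Sigma(\G_0)$, convergence of the empirical measures on every cylinder upgrades to weak-$*$ convergence.

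Finally, for $d:=p(\G_0)>1$, the shift $\sigma^d$ preserves each of the $d$ cyclically permuted components $\Sigma_0,\ldots,\Sigma_{d-1}$ of $\Sigma(\G_0)$, and the restriction of $\sigma^d$ to each $\Sigma_r$ is an aperiodic SFT to which the previous case applies. For $p$ divisible by $d$ the $p$-periodic points of $\sigma$ split evenly among the $\Sigma_r$ (the equidistribution among classes is forced by the graph automorphism cycling them), and averaging the $d$ resulting Parry measures for $\sigma^d$ recovers the Parry measure $\mu$ of $\sigma$. I expect the only real care to be in making the Perron--Frobenius convergence uniform enough in the choice of cylinder (automatic since $\G_0$ is finite, so only finitely many words of each length exist) and in the cyclic bookkeeping for the periodic case; the rest is a computation.
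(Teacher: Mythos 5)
Your argument is correct, and it is the standard elementary proof for finite SFTs. The paper itself offers no proof of this statement: it is quoted from Bowen \cite{Bow}, whose original argument establishes equidistribution of periodic orbits along equilibrium states for expansive systems with specification, via uniqueness of equilibrium states --- a far more general but less explicit route. Your Perron--Frobenius computation ($\sharp\Delta_p(\G_0)=\operatorname{tr}(A^p)$, the cylinder count $(A^{p-k})_{i_k,i_0}$, the limit $v_{i_0}w_{i_k}\lambda^{-k}$ matching the Parry measure, and the upgrade from cylinders to weak-$*$ convergence) buys a self-contained, purely linear-algebraic proof in the only case the paper needs, at the cost of not generalizing beyond Markov shifts. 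One small repair in the periodic case $d=p(\G_0)>1$: the exact even split of $\Delta_p(\G_0)$ among the cyclic classes $V_0,\dots,V_{d-1}$ is not forced by a ``graph automorphism cycling them'' (no such automorphism need exist); it follows instead from the bijection sending a closed path $(u_1,\dots,u_{p+1})$ based in $V_r$ to its one-step rotation $(u_2,\dots,u_{p+1},u_2)$ based in $V_{r+1}$, or equivalently from the identity $\sum_{i\in V_r}v_iw_i=\sum_{i\in V_{r+1}}v_iw_i$ obtained by applying $v^TA=\lambda v^T$ and $Aw=\lambda w$. With that justification substituted, the cyclic bookkeeping and the averaging of the $d$ Parry measures of $\sigma^d$ go through as you describe.
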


Finally we recall that the Gurevic entropy of a connected oriented graph with a countable set of vertices may be written as the  exponential growth in $p$ of the number of closed paths of length $p$ at a given vertex $u$:

\begin{thm}\label{new}\cite{Gur}
Let $\G$ be a connected oriented graph with a countable set of vertices. Then for any $u\in V(\G)$, 

$$h(\G)=\lim_{p, \ p(\G) \mid p}\frac{1}{p}\log \sharp \Delta_p^u(\G).$$
\end{thm}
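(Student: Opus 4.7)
The plan is to establish both directions of the equality $h(\G) = L(u) := \lim_{p,\ p(\G) \mid p} \frac{1}{p} \log \sharp \Delta_p^u(\G)$. Existence of the limit as a supremum along multiples of $p(\G)$ follows from Fekete's lemma applied to the superadditive inequality $\sharp \Delta_{p+q}^u(\G) \geq \sharp \Delta_p^u(\G) \cdot \sharp \Delta_q^u(\G)$, obtained by concatenating closed paths at $u$. Moreover $L(u)$ does not depend on $u$: given $v \in V(\G)$, connectedness provides paths $\alpha : v \to u$ and $\beta : u \to v$, and pre/post-concatenation with $\alpha, \beta$ injects $\Delta_p^u(\G)$ into $\Delta_{p + |\alpha| + |\beta|}^v(\G)$, so $L(u) \leq L(v)$, and symmetrically.

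For the upper bound $h(\G) \leq L(u)$, I would apply Theorem \ref{finit} to reduce to finite connected subgraphs $\G_0 \subset \G$. By appending paths from $u$ into $\G_0$ and back (using connectedness of $\G$) one may enlarge $\G_0$ so that $u \in V(\G_0)$, without decreasing $h(\G_0)$. For such a finite connected graph, Perron--Frobenius applied to the $\{0,1\}$-transition matrix $A_{\G_0}$ gives $h(\G_0) = \log \rho(A_{\G_0})$, while $(A_{\G_0}^p)_{uu} = \sharp \Delta_p^u(\G_0)$ grows precisely at rate $\rho(A_{\G_0})$ along multiples of $p(\G_0)$. Since $\sharp \Delta_p^u(\G_0) \leq \sharp \Delta_p^u(\G)$, we get $h(\G_0) \leq L(u)$, and passing to the supremum over $\G_0$ completes this direction.

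For $L(u) \leq h(\G)$, I would approximate $L(u)$ from below by compact subshifts of $\Sigma(\G)$. Given $M$, let $R_M$ denote the set of first returns at $u$ of length at most $M$, which is finite by admissibility, and let $X_M \subset \Sigma(\G)$ be the set of sequences visiting $u$ unboundedly in both directions with consecutive visits at most $M$ apart. Then $X_M$ is a closed shift-invariant subshift encodable as a finite subshift of finite type over the alphabet $R_M \times \{\text{position within first return}\}$, so $h(\G) \geq h(X_M)$. A standard renewal generating-function computation yields $h(X_M) = -\log z_M$, where $z_M$ is the smallest positive root of $\sum_{\gamma \in R_M} z^{|\gamma|} = 1$. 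In parallel, the decomposition (\ref{deco}) of arbitrary closed paths at $u$ into first returns identifies the generating function of $\sharp \Delta_p^u(\G)$ with $1/(1 - r(z))$, where $r(z) := \sum_{\gamma} z^{|\gamma|}$ sums over all first returns at $u$; hence $L(u) = -\log z_\infty$ with $z_\infty$ the smallest positive root of $r(z) = 1$, or the radius of convergence of $r$ in the transient case. Monotone convergence $r_M \uparrow r$ then forces $z_M \downarrow z_\infty$, and hence $h(X_M) \uparrow L(u)$. The main obstacle is this renewal analysis at the boundary of convergence, especially the transient case $r(z_\infty) < 1$: one must appeal to general properties of power series with nonnegative coefficients to identify $L(u)$ with the negative log of the radius of convergence of $r$, and to verify that the truncated roots $z_M$ still converge down to $z_\infty$.
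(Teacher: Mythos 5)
The paper offers no proof of this statement at all: it is quoted as a known result of Gurevich (see \cite{Gur}, and originally \cite{ggur}), so there is nothing internal to compare your argument against. Your reconstruction is essentially Gurevich's original proof and, as far as I can check, it is correct. The upper bound via Theorem \ref{finit}, enlargement of a finite connected $\G_0$ so that $u\in V(\G_0)$, and Perron--Frobenius for the irreducible matrix $A_{\G_0}$ is the standard reduction; the only point to make explicit is that $p(\G)\mid p(\G_0)$ and that the limit defining $L(u)$ exists by your Fekete step, so its restriction to the sublattice $p(\G_0)\mathbb{N}$ gives the same value, whence $\log\rho(A_{\G_0})\le L(u)$. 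The lower bound via the loop shifts $X_M$ and the renewal identity $\sum_p \sharp\Delta^u_p(\G)\,z^p=(1-r(z))^{-1}$ is the classical Vere-Jones route, and the boundary analysis you flag does go through: since $r$ has nonnegative coefficients it is increasing on $[0,R_r)$ and diverges beyond its radius of convergence $R_r$, so $z_M\downarrow\min(z_\infty,R_r)$, which is exactly the radius of convergence of $\sum_p\sharp\Delta^u_p(\G)z^p$; and that radius computes $L(u)$ because every closed path at $u$ has length divisible by $p(\G)$, so the $\limsup$ over all $p$ coincides with your limit over $p(\G)\mid p$. Two small caveats: the finiteness of $R_M$ uses admissibility, which is not a stated hypothesis of the theorem (though it holds for every graph the paper applies it to, and in the non-admissible case both sides are $+\infty$ by the same loop-shift estimate applied to finite subfamilies of first returns); and the identification $h(X_M)=-\log z_M$ requires checking that the natural continuous bijection from the loop SFT onto $X_M$ is a conjugacy, which follows from compactness since the visit times to $u$ are recoverable from a point of $X_M$.
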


As the Gurevic entropy of an oriented graph $\G$ with a countable set of vertices is the supremum of the Gurevic entropy of its connected components, we always have the following  inequality for any $u\in V(\G)$:

\begin{eqnarray}\label{gugu}
h(\G)\geq\lim_{p, \ p(\G) \mid p}\frac{1}{p}\log \sharp \Delta_p^u(\G).
\end{eqnarray}

\subsection{Main proposition}

\begin{prop}\label{l}
 Let $(\G_n)_{n\in \mathbb{N}}$ be a sequence of  graphs uniform with respect to $\underline{E}$ converging  to a graph $\G$. Let $(\G'_n)_{n\in \mathbb{N}}$ be a sequence of  finite connected  graphs with $\G'_n\subset \G_n$ for all integers $n$ and   $\lim_n h(\G'_n)>h(\G)$. Let $\mu_n\in   \mathcal{M}(\Sigma(\G'_n))\subset \mathcal{M}(\Sigma(\G_n)) $  be the maximal measure of $\Sigma(\G'_n)$.  Then $(\mu_n)_{n\in \mathbb{N}}$ goes to infinity.
\end{prop}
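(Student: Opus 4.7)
My plan is to argue by contradiction, exploiting the first-return structure of the Parry measure on $\G'_n$. By the remark following the definition of ``goes to infinity'', together with the uniformity bound (\ref{unif}), it suffices to show that for every $L \in \mathcal{L}$, $\sup_{u \in V(\G_n) \cap E_L} \mu_n([\{u\}]) \to 0$. Assume for contradiction that this fails: after extracting a subsequence there exist $L$, $\epsilon > 0$, and $u^n \in V(\G_n) \cap E_L$ with $c_n := \mu_n([\{u^n\}]) \geq \epsilon$. Write $h_n = h(\G'_n)$ and let $g_k^{(n)}$ be the number of first returns to $u^n$ in $\G'_n$ of length $k$.

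Since $\mu_n$ is the Parry measure of the finite connected graph $\G'_n$, a direct calculation from the Parry transition probabilities shows that the normalized restriction $\tilde\mu_n := \mu_n(\cdot \cap [\{u^n\}])/c_n$ assigns mass $e^{-|\tau| h_n}$ to each first-return cylinder $[\tau]$ in $[\{u^n\}]$; together with Kac's return-time formula this gives
\[
\sum_k g_k^{(n)} e^{-k h_n} = 1 \qquad\text{and}\qquad \sum_k k\, g_k^{(n)} e^{-k h_n} = \frac{1}{c_n} \leq \frac{1}{\epsilon}.
\]
Markov's inequality then yields $b_M^{(n)} := \sum_{k>M} g_k^{(n)} e^{-k h_n} \leq 1/(\epsilon M)$.

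The key step is to compare $h_n$ with the exponential growth rate $h_n^M$ of $\#\Delta_{p,M}^{u^n}(\G'_n)$ as $p \to \infty$; equivalently $e^{-h_n^M}$ is the smallest positive root of $1 - G_M^{(n)}$, where $G_M^{(n)}(z) := \sum_{k \leq M} g_k^{(n)} z^k$. We have $G_M^{(n)}(e^{-h_n}) = 1 - b_M^{(n)}$, and for $M \geq 2/\epsilon$ the derivative satisfies $(G_M^{(n)})'(e^{-h_n}) = e^{h_n} \sum_{k \leq M} k\, g_k^{(n)} e^{-k h_n} \geq e^{h_n}/2$ (using $k \geq 1$ and $\sum_{k \leq M} g_k^{(n)} e^{-k h_n} \geq 1/2$). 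Since $G_M^{(n)}$ is increasing and convex on $(0,\infty)$, a mean-value argument between $e^{-h_n}$ and $e^{-h_n^M}$ yields
\[
h_n^M \geq h_n - 2 b_M^{(n)} \geq h_n - \frac{2}{\epsilon M}.
\]

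To finish, note that every first return in $\G'_n \subset \G_n$ is also a first return in $\G_n$, so $\#\Delta_{p,M}^{u^n}(\G'_n) \leq \#\Delta_{p,M}^{u^n}(\G_n)$. The convergence $\G_n \to \G$ then gives, for $n$ large depending on $L$ and $M$, a vertex $u \in V(\G)$ with $\#\Delta_{p,M}^{u^n}(\G_n) \leq \#\Delta_p^u(\G)$ for every $p$; inequality (\ref{gugu}) then yields $h_n^M \leq h(\G)$. Combining, $h_n \leq h(\G) + 2/(\epsilon M)$ whenever $n$ is sufficiently large, and letting first $n \to \infty$ and then $M \to \infty$ contradicts $\lim_n h_n > h(\G)$. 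The main obstacle is the third step: converting the Kac-tail bound $b_M^{(n)} \leq 1/(\epsilon M)$ into the entropy comparison $h_n^M \geq h_n - 2/(\epsilon M)$ uniformly in $n$ is what allows the hypothesis $c_n \geq \epsilon$ to interact with the convergence of the graphs.
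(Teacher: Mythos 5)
Your argument is correct, but it reaches the conclusion by a genuinely different mechanism from the paper's. The paper never computes the Parry measure explicitly: it invokes Bowen's equidistribution of periodic orbits (Theorem \ref{Bowen}) to write $\mu_n([\{u^n\}])$ as $\lim_p \sharp\Delta_p^{u^n}(\G'_n)/\sharp\Delta_p(\G'_n)$, and then shows that closed paths at $u^n$ with at least $p\epsilon$ returns are exponentially negligible via the injection $\gamma\mapsto(\gamma^-,\gamma^+,i(\gamma))$, which separates the short first returns (counted inside $\G$ through the convergence inequality) from the long ones at the cost of a binomial factor $\sharp\mathcal{P}^{p/M}_p$. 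You instead use the closed-form value of the Parry measure on first-return cylinders ($\tilde\mu_n([\tau])=e^{-|\tau|h_n}$), Kac's formula to control the tail $b_M^{(n)}$, and a renewal-equation perturbation of the root of $1-G_M^{(n)}$ to convert that tail bound into the entropy estimate $h_n^M\geq h_n-2/(\epsilon M)$; the contradiction then rests on the same two external inputs as in the paper, namely the convergence inequality $\sharp\Delta^{u^n}_{p,M}(\G_n)\leq\sharp\Delta^u_p(\G)$ and Gurevich's bound (\ref{gugu}). Your route is shorter and more quantitative (it produces an explicit modulus relating $\mu_n([\{u^n\}])$ to the entropy gap $h(\G'_n)-h(\G)$), at the price of using the algebraic structure of the Parry measure rather than only its characterization by periodic orbits. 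Two points deserve an explicit line: (i) the identification of $e^{-h_n^M}$ with the smallest positive root of $1-G_M^{(n)}$ comes from $\sum_{p\geq 1}\sharp\Delta^{u^n}_{p,M}(\G'_n)z^p=(1-G_M^{(n)}(z))^{-1}-1$ together with the fact that for a series with nonnegative coefficients the exponential growth rate is the reciprocal of the radius of convergence; this presupposes $G_M^{(n)}\not\equiv 0$, which your bound $b_M^{(n)}\leq 1/(\epsilon M)\leq 1/2$ guarantees once $M\geq 2/\epsilon$; (ii) the inclusion $\Delta^{u^n}_{p,M}(\G'_n)\subset\Delta^{u^n}_{p,M}(\G_n)$ holds because the first-return decomposition of a closed path at $u^n$ is intrinsic to the path and does not depend on the ambient graph. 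With these remarks added, the proof is complete.
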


\begin{proof}
Fix $L\in \mathcal{L}$.  Let $\epsilon>0$. Let $\delta>0$ with  $\lim_nh(\G'_n)>h(\G)+\delta$.   We fix $M$ large enough so that $1/M<\epsilon/2$ and  $\lim_pe^{-\epsilon \delta p/4}\sum_{0\leq k\leq p/M}{p\choose k}=0$ (it is well known that $\limsup_p\frac{\log{p\choose [\alpha p]}}{p}$ goes to zero when $\alpha$ goes to zero).  By convergence of $\G_n$ to $\G$ we may  fix $n_0$ large enough such that for $n>n_0$, there exists for all $u^n\in V(\G_n)\cap E_{L}$ a vertex $u\in V(\G)$ such that  for all integers $p$:
\begin{eqnarray}\label{cconv}\sharp \Delta^{u^n}_{p,M}(\G_n)\leq \sharp\Delta^u_p(\G).\end{eqnarray}
 We can also assume that  $h(\G'_n)>h(\G)+\delta$ for $n>n_0$. We fix $n>n_0$ and  we prove  now  $\mu_n([\{u^n\}])\leq\epsilon$ for any $u^n\in V(\G_n)\cap E_{L}$.

As $[\{u^n\}]$ is a clopen (possibly empty) set in $\Sigma(\G'_n)$ we have by Theorem \ref{Bowen}: 
\begin{eqnarray*}
\mu_n([\{u^n\}])&=&\lim_{p, \ p(\G'_n) \mid p}\frac{1}{\sharp\Delta_p(\G'_n)}\sum_{\gamma\in \Delta_p(\G'_n)}\delta_{\tilde{\gamma}}([\{u^n\}]);\\
 & =&\lim_{p, \ p(\G'_n) \mid p}\frac{\sharp \Delta_p^{u^n}(\G'_n)}{\sharp \Delta_p(\G'_n)}. 
 \end{eqnarray*}

The number of returns at $u^n$ in a closed path $\gamma=(u^n=u_1,u_2,...,u_{p+1}=u^n)$ of length $p
$ at $u^n$ will be denoted by $r(\gamma)$, i.e. $r(\gamma):=\{1\leq k\leq p, \ u_k=u^n\}$.  It is also the number of closed paths  at $u^n$ defining the same periodic orbit in $\Sigma(\G'_n)$. We let $\tilde{\Delta}_p^{u^n}(\G'_n)$ be the subset of closed paths $\gamma\in\Delta_p^{u^n}(\G'_n)$ such that the minimal period of  $\tilde{\gamma}\in \Sigma(\G'_n)$ is equal to $p$.  When $\gamma$ belongs to $\tilde{\Delta}^{u^n}_p(\G'_n)$ there are $p$ distinct closed paths in $\Delta_p(\G'_n)$ (in general not at the vertex $u^n$)  defining the same periodic orbit in $\Sigma(\G'_n)$. Therefore we have for any $p$:

$$\frac{\sharp \left\{\gamma \in \tilde{\Delta}_p^{u^n}(\G'_n), \ r(\gamma)<p\epsilon\right\}}{\sharp \Delta_p(\G'_n)}< \epsilon.$$

It follows immediately  from Theorem \ref{new} and $h(\G'_n)>0$ that 

$$\lim_{p, \ p(\G'_n) \mid p}\frac{\sharp \Delta_p^{u^n}(\G'_n)\setminus \tilde{\Delta}_p^{u^n}(\G'_n) }{\sharp \Delta_p(\G'_n)}\leq \lim_{p, \ p(\G'_n) \mid p}\frac{\sum_{q\mid p, \ q\neq p}\sharp \Delta_q^{u^n}(\G'_n)}{\sharp \Delta_p(\G'_n)}=0.$$

To conclude $\mu_n([\{u^n\}])\leq \epsilon$ (recall $n$ is fixed), it is enough to show that  for large $p$ the cardinality of the set  of  closed paths $\gamma$ of length $p$  at $u^n$ with $r(\gamma)\geq p\epsilon$ is less than $e^{p(h(\G'_n)-\epsilon\delta/8)}$, because  the cardinality of $\Delta_p(\G'_n)$ grows exponentially faster in $p$ by Theorem \ref{new} and thus we will get finally 

\begin{eqnarray*}
\mu_n([\{u^n\}])&=& \lim_{p, \ p(\G'_n) \mid p}\frac{\sharp \left\{\gamma \in \tilde{\Delta}_p^{u^n}(\G'_n), \ r(\gamma)<p\epsilon\right\}}{\sharp \Delta_p(\G'_n)}\leq\epsilon.
 \end{eqnarray*}

Therefore  Proposition \ref{l} will be proved once we have  shown the following claim.

\begin{Claim}
There exists $P$ (depending on $n$) such that for all $p>P$ we have 
$$\sharp \left\{\gamma \in \Delta_p^{u^n}(\G'_n), \ r(\gamma)\geq p\epsilon\right\}<  e^{p(h(\G'_n)-\epsilon\delta/8)}.$$
\end{Claim}

\begin{demof}{the Claim}
 For  $\gamma\in \Delta^{u^n}_p(\mathcal{G}'_n)$ we let  $\gamma^-$ and $\gamma^+$ be the closed paths at $u$ obtained by  concatenating  the first returns at $u^n$ appearing in $\gamma$  of length less than or equal to $M$ and larger than  $M$, respectively (recall $M$ was fixed earlier and depends only on $\epsilon$ and $\delta$). More precisely if $\gamma=
 \gamma_1*...*\gamma_j$ is the writing (\ref{deco}) of $\gamma$ in first returns at $u$, then we let 
 $\gamma^-=\gamma_{i_1}*...*\gamma_{i_k}$ where $\{i_1<...<i_k\}$ is the set of integers $i\in [1,j]$ such that the length of $\gamma_i$ is less than or equal to  $M$ and we define similarly $\gamma^+$.  For  $\gamma=(u^n=u_1,u_2,...,u_{p+1}=u^n)\in \Delta_p^{u^n}(\G'_n)$ we  also let $i(\gamma)$ be the set of integers $k\in [1, p]$ such that there exists a first return at $u_k=u^n$ of length larger than $M$. Note that the cardinality of $i(\gamma)$ is less than or equal to $p/M<p\epsilon/2$.
 
  We let $\mathcal{P}^l_p$ be the set of all subsets of $\{1,...,p\}$ whose cardinality is  less than or equal to $l$. We consider the map $\Phi:\Delta^{u^n}_p(\mathcal{G}'_n)\rightarrow \bigcup_{0\leq q\leq p}\Delta^{u^n}_{q,M}(\mathcal{G}_n)\times\Delta^{u^n}_{p-q}(\mathcal{G}'_n)\times \mathcal{P}^{p/M}_p$
which maps any $\gamma \in \Delta^{u^n}_p(\mathcal{G}'_n)$ to the triple $(\gamma^-, \gamma^+,i(\gamma))$. Clearly this map is injective. 
Now for any $\gamma\in\Delta^{u^n}_p(\mathcal{G}'_n)$  the length of $\gamma^-$ is larger than or equal to $r(\gamma)-\sharp i(\gamma)$. By Inequality (\ref{cconv}) it follows that for all $p$:

\begin{eqnarray*}
\sharp \left\{\gamma \in \Delta_p^{u^n}(\G'_n), \ r(\gamma)\geq p\epsilon\right\}& \leq & \sum_{p\geq q>p\epsilon/2}\sharp \left\{ \Delta^{u^n}_{q,M}(\mathcal{G}_n)\times\Delta^{u^n}_{p-q}(\mathcal{G}'_n)\times \mathcal{P}^{p/M}_p\right\};\\
& \leq &\sum_{\ p\geq q> p\epsilon/2 } \sharp \Delta^u_{q}(\mathcal{G}) \times \sharp \Delta^{u^n}_{p-q}(\mathcal{G}'_n)\times \sharp \mathcal{P}^{p/M}_p;
\end{eqnarray*}

and then for large $p$ (depending on the fixed $n$) we finally obtain  by Inequality (\ref{gugu}) :

\begin{eqnarray*}
\sharp \left\{\gamma \in \Delta_p^{u^n}(\G'_n), \ r(\gamma)\geq p\epsilon\right\}& \leq &  e^{\epsilon \delta p/4} \sum_{\ p\geq q> p\epsilon/2 } \sharp \Delta^u_{q}(\mathcal{G}) \times \sharp \Delta^{u^n}_{p-q}(\mathcal{G}'_n);\\
                                        &\leq & e^{\epsilon \delta p/3}\sum_{p\geq q> p\epsilon/2 }e^{qh(\G)+(p-q)h(\G'_n)};\\
                                        &< & e^{p(h(\G'_n)-\epsilon\delta/8)}.
\end{eqnarray*}
\end{demof}
As previously mentioned this concludes the proof of the  Proposition \ref{l}.
\end{proof}

\section{Proof of the Main Theorem via Buzzi-Hofbauer diagram}
\subsection{Symbolic dynamics associated to natural partitions and the Hofbauer Markov Diagram}\label{defin}
We consider a $C^1$ interval map $f$. Let $C(f)$ be the critical set of $f$, i.e. the set of vanishing points of the derivative $f'$. A (resp. strictly) monotone branch of $f$ is an open interval $I$, such that $f|_I$ is monotone (resp. strictly). We say $I$ is a \textbf{critical monotone branch} if $I$ is a monotone branch (not necessarily strictly) and the two boundary points of $I$  belong to $C(f)\cup \{0,1\}$. A (countable) collection $P$ of disjoint critical monotone branches is called a \textbf{natural partition} of $f$ when the union of all monotone branches in $P$ covers any strictly monotone branch of $f$, i.e.  $\{f'\neq 0\}\subset \bigcup_{I\in P} I$.

 For a natural partition $P$ of $f$ the two-sided \textbf{symbolic dynamic}  $\Sigma(f,P)$ associated to $f$ is defined as the shift on the closure in $P^{\mathbb{Z}}$ (for the product topology) of the two-sided sequences $A=(A_n)_n$ such that for all $n\in \mathbb{Z}$ and $l\in \mathbb{N}$ the word $A_n...A_{n+l}$ is \textbf{admissible}, by which we mean  that  the intersection $\bigcap_{k=0}^lf^{-k}A_{n+k}$ is non empty and the $f^{l+1}$-image of this open interval is not reduced to a point.
  The \textbf{follower set} of a finite $P$-word $B_n...B_{n+l}$  is $fol(B_n...B_{n+l}):=\{A_{n+l}A_{n+l+1}...\in P^{\mathbb{N}},$ s.t. $\exists (A_n)\in \Sigma(f,P)$  with $A_n...A_{n+l}=B_n...B_{n+l}\}$.

Let  $\mathcal{P}$ be the set of admissible  $P$-words. We consider the following equivalence relation on $\mathcal{P}$. We say $A_{-n}...A_0\sim B_{-m}...B_0$ if and only if  there exist $0\leq k\leq \min(m,n)$ such that :

\begin{itemize}
\item $A_{-k}...A_0 = B_{-k}...B_0$;
\item $fol(A_{-n}...A_0)=fol(A_{-k}...A_0)$;
\item   $fol(B_{-m}...A_0)=fol(B_{-k}...A_0)$.
\end{itemize}

We endow the quotient space $\mathcal{D}=\mathcal{D}(f,P):=\mathcal{P}/\sim$   with a structure of oriented graph, known as the Buzzi-Hofbauer diagram, in the following way \cite{Buz1} : there exists an oriented arrow  $\alpha \rightarrow \beta$ between two elements $\alpha,\beta$ of $\mathcal{D}$ if and only if there exists an integer $n$ and $A_{-n}...A_0A_1\in \mathcal{P}$ such that $\alpha\sim A_{-n}...A_0$ and $\beta\sim A_{-n}...A_0A_1$. 

  \textbf{The significative part} of $\alpha\in\mathcal{D}$ is the representative  $A_{-n_{\alpha}}...A_0$ of $\alpha$ with the shortest length.  Such a word $A_{-n_{\alpha}}...A_0$ is also said \textbf{irreducible}, it is the shortest element in its class. In particular $fol(A_{-n_{\alpha}}...A_0)\neq fol(A_{-n_{\alpha}+1}...A_0)$ when $n_\alpha>0$. 
 
We let $\mathcal{D}_N$ be  the subset of $\mathcal{D}$ generated by elements of $\bigcup_{k=1}^NP^k$, i.e. $\alpha\in\mathcal{D}_N$  if and only if there exists $0\leq k< N$ and $A_{-k}...A_0\in \mathcal{P}$ such that  $\alpha\sim A_{-k}...A_0$. Thus $\mathcal{D}_N$ is  the subset of $\mathcal{D}$ whose significative part has length less than or equal to $N$.

\subsection{Convergence of the Buzzi-Hofbauer diagram}\label{contgr}

We let $E$ be the union $\bigcup_{f,P}\mathcal{D}(f,P)$ over all $C^1$ interval maps $f$ and all natural partitions $P$ of $f$. For any $\alpha\in \mathcal{D}(f,P)$ we let $L(\alpha)$ be  the length of $f^{N'+1}(\bigcap_{0\leq i\leq N'} f^{i-N'}A_{-i})$  for some (any) representative $A_{-N'}...A_0\in \mathcal{P}$ of $\alpha$.  For any $(N,K)\in (\mathbb{N}\setminus \{0\})^2$ we consider the subset $E_{N,K}$ of $E$ defined  by 
$$E_{N,K}:=\{\alpha\in \bigcup_{f,P}\mathcal{D}_N(f,P), \ L(\alpha)\geq 1/K\}.$$ Clearly we have $E=\bigcup_{N,K}E_{N,K}$. \\

We analyse  now the convergence of the Buzzi-Hofbauer diagrams associated to a converging sequence of $C^1$ interval maps. We begin with some preliminary facts. \\

 \textbf{Fact 0:} Let $\mathcal{F}$ be a $C^1$ bounded set of $C^1$ interval maps. For any $N,K,M$ there exists $\tilde{N},\tilde{K}$ depending only on $N,K,M,\sup_{f\in \mathcal{F}}\|f'\|_{\infty}$ such that for any $f\in \mathcal{F}$ and for any natural partition $P$ of $f$, any closed path in $\mathcal{D}(f,P)$ of length $M$ at  a vertex in $E_{N,K}$ is contained in $E_{\tilde{N},\tilde{K}}$.\\
 
  \begin{proof}
The proof follows immediately from the two following properties of the Buzzi-Hofbauer diagram:
  \begin{itemize}
\item if $\alpha\in \mathcal{D}_N(f,P)$  and  $\alpha\rightarrow \beta$ then $\beta\in \mathcal{D}_{N+1}(f,P)$; 
\item  if $\alpha\rightarrow \beta$ then $L(\beta)\leq \|f'\|_{\infty} L(\alpha)$.
\end{itemize}
Indeed one only needs to put $\tilde{N}=N+M$ and $\tilde{K}=[K\sup_{f\in \mathcal{F}}\|f'\|_{\infty}^M]+1$.
\end{proof}
 
 For any $C^1$ interval map $f$, any natural partition $P$  of $f$  and any positive integer $m$, we denote by $P(m)$ the set of elements of $P$ where $|f'|$ attains $1/m$. \\
 
\textbf{Fact 1:} Let $\mathcal{F}$ be a $C^1$ bounded set of $C^1$ interval maps. 
 For any $N,K$ there exists $m$ depending only on $N,K,\sup_{f\in \mathcal{F}}\|f'\|_{\infty}$ such that for any $f\in \mathcal{F}$ and any natural partition $P$ of $f$, the set $\mathcal{D}(f,P)\cap E_{N,K}$ is generated by $\bigcup_{k=1}^N P(m)^k$. 
 
 \begin{proof}
Let $\alpha\sim  A_{-N'}...A_0$ with $N'<N$ and  with the length of $f^{N'+1}(\bigcap_{0\leq i\leq N'} f^{i-N'}A_{-i})$  larger than or equal to $1/K$. 
Clearly we have $\prod_{i=0}^{N'}\sup_{x\in A_{-i}}|f'(x)| \geq 1/K$ and thus we have for any $0\leq i \leq N'$: 

$$\sup_{x\in A_{-i}}|f'(x)|\geq \frac{1}{K\|f'\|_{\infty}^{N'}}.$$
Therefore it is enough to take $m>K\max(1,\sup_{f\in \mathcal{F}}\|f'\|_{\infty})^N$.
 \end{proof}

 From now we fix  $C^1$ interval maps $f,(f_n)_{n\in \mathbb{N}}$ such that $(f_n)_n$ converges  to $f$ in the $C^1$ topology and we consider natural partitions $(P_n)_n$ of $(f_n)_n$. \\

\textbf{Fact 2:} For any positive integer $m$ we have 
$$\sup_n \sharp P_n(m)<+\infty.$$

\begin{proof}
Let $m\in \mathbb{N}$.  Assume  $\sup_n\sharp P_n(m)=+\infty$. Clearly for any fixed $n$ the set $P_n(m)$ is finite.  Then up to extract a subsequence one may find for all $n$ an element $(a_n,b_n)$ of $P_n(m)$ whose length $|a_n-b_n|$ goes to zero when $n$ goes to infinity. For any $n$ we let $c_n\in (a_n,b_n)$ with $|f'_n(c_n)|\geq1/m$. We may assume that  $a_n,b_n$ are not boundary  points of the unit interval, so that $f_n'(a_n)=f_n'(b_n)=0$.  Any  accumulation  point $(a,b,c)$ of the sequence $(a_n,b_n,c_n)$ satisfies $a=b=c$.    Moreover as $(f_n)_n$ is $C^1$ converging to $f$, we have $f'(a)=f'(b)=0$  and $|f'(c)|\geq 1/m$ and we get a contradiction. Therefore  we have $\sup_n\sharp P_n(m)<+\infty$ for any given $m$. 
\end{proof}

When $A$ is an open interval and $n$ is an integer we denote by $A^n$ the element\footnote{if it exists - when using the notation $A^n$ we claim it is well defined.} of $P_n$ which contains the midpoint of $A$.\\

\textbf{Fact 3:}\label{part} There exists a subsequence $(n_k)_k$ with the following property. There is a nondecreasing sequence $(R_m)_m$ of finite collections of disjoint critical monotone branches of $f$, such that for any positive integer $m$:
\begin{itemize}
\item for all $k\geq m$ we have $P_{n_k}(m)=\{A^{n_k}, \ A \in R_m \}$;
\item for any $A\in R_m$ the sequence $(A^{n_k})_{k\geq m}$ goes to $A$ in the Hausdorff topology   when $k$ goes to infinity.
\end{itemize} 
Moreover  the union $R:=\bigcup_{m\in \mathbb{N}}R_m$ defines a natural partition of  $f$.

\begin{proof}
 By a Cantor's diagonal argument, one may find  a subsequence $(n_k)_k$ 
such that for any $m$ the cardinality of $(P_{n_k}(m))_k$ is constant for $k\geq m$  and that $P_{n_k}(m)$ converges in the 
Hausdorff topology to a (finite) collection $R_m$ of  critical monotone branches of $f$. We may also assume $P_{n_k}(m)$ so close  to $R_m$ for any $k\geq m$  that   $P_{n_k}(m)$ is given by the family $\{A^{n_k}, \ A \in R_m \}$.

 Finally observe that for any point $x$ in $\{f'\neq 0\}$ there exist an integer $m$ and an open neighborhood $U$ of $x$ which is contained in a unique  element of $P_{n_k}(m)$ for large $k$. By taking the limit in $k$ we get that $U$ is contained in some $A\in R_m\subset P$. Therefore $R=\bigcup_{m\in \mathbb{N}}R_m$ is a natural partition of $f$.
\end{proof}

We let $\mathcal{D}^{n}$ and  $\mathcal{D}$ be the Buzzi-Hofbauer diagram associated to the natural partitions  $P_{n}$ and $R$ (given by the  above Fact 3) of $f_{n}$ and $f$ respectively. It follows immediately from Fact 1 and Fact 2 that $(\mathcal{D}^{n})_n$ is uniform with respect to $\underline{E}=(E_{N,K})_{N,K}$. From Fact 0 and Fact 1 one also easily sees that all these diagrams are admissible graphs. In fact by Theorem \ref{new} admissibility may be deduced  more directly from the finiteness of  Gurevic entropy which will follow from the Isomorsphism Theorem (Theorem \ref{encor}).

 We prove now the convergence of the Buzzi-Hofbauer diagrams by taking  again a subsequence.

\begin{lem}\label{ppp}
There exists a subsequence $(n_{k_l})_l$  such that 
the Buzzi-Hofbauer diagrams $(\mathcal{D}^{n_{k_l}})_l$  of  $(f_{n_{k_l}})_l$  converge to the Buzzi-Hofbauer diagram $\mathcal{D}$ of $f$. 
\end{lem}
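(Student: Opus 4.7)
The plan is to construct the subsequence via a diagonal extraction that combines the Hausdorff-type convergence of alphabets from Fact~3 with combinatorial stabilization, and then to verify the convergence condition of Definition \ref{defii} by transporting closed paths of bounded first-return length from $\mathcal{D}^{n_k}$ to $\mathcal{D}$ through the induced letter-bijection. The key geometric input will be that vertices in $E_{N,K}$ correspond to image intervals of length at least $1/K$, which survive the limit $f_{n_k}\to f$.

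First I would fix an enumeration of all triples $(N,K,M)\in\mathbb{N}^3$. To each such triple Fact~0 associates a pair $(\tilde N,\tilde K)$ bounding the region of $E$ swept by first returns of length $\leq M$ based at a vertex in $E_{N,K}$, and then Facts~1 and~2 supply an integer $\tilde m=m(\tilde N,\tilde K)$ such that every significative part of a vertex in $V(\mathcal{D}^n)\cap E_{\tilde N,\tilde K}$ uses only letters in $P_n(\tilde m)$, with $\sup_n\sharp P_n(\tilde m)<+\infty$. The subsequence $(n_k)_k$ of Fact~3 already yields $P_{n_k}(m)\to R_m$ in the Hausdorff topology for every $m$, and hence, for $k\geq m$, a natural bijection $\phi_m\colon P_{n_k}(m)\to R_m$ sending $A^{n_k}$ to $A$. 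A Cantor diagonal extraction to a further subsequence $(n_{k_l})_l$ ensures that, for every $m$, the admissibility relations among $P_{n_{k_l}}(m)$-words of bounded length are eventually constant in $l$.

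Then, for any $u^{n_{k_l}}\in V(\mathcal{D}^{n_{k_l}})\cap E_{N,K}$ with significative part $A^{n_{k_l}}_{-N'}\cdots A^{n_{k_l}}_0$ (where $N'\leq N$ and all letters lie in $P_{n_{k_l}}(m)$ for $m=m(N,K)$ from Fact~1), I apply $\phi_m$ letterwise to obtain a word $A_{-N'}\cdots A_0$ in $R_m$; its image interval under $f$ has length $\geq 1/K$ as a Hausdorff limit of the corresponding intervals for $f_{n_{k_l}}$, so the word is admissible and determines a vertex $u\in V(\mathcal{D})$. Any $\gamma\in\Delta^{u^{n_{k_l}}}_{p,M}(\mathcal{D}^{n_{k_l}})$ stays in $E_{\tilde N,\tilde K}$ by Fact~0 and is encoded by its letter sequence $(A^{n_{k_l}}_1,\ldots,A^{n_{k_l}}_p)\in P_{n_{k_l}}(\tilde m)^p$ (each being the rightmost symbol of the corresponding significative part, hence lying in that significative part by the definition of $\sim$). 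Letterwise application of $\phi_{\tilde m}$ yields $(A_1,\ldots,A_p)\in R_{\tilde m}^p$, and the same image-interval argument shows $A_{-N'}\cdots A_0 A_1\cdots A_p$ is admissible in $\Sigma(f,R)$ with $L$-value $\geq 1/\tilde K$, so it defines a path $u=\tilde v_0\to\cdots\to\tilde v_p$ in $\mathcal{D}$. The assignment $\gamma\mapsto(A_1,\ldots,A_p)$ is injective by the bijectivity of $\phi_{\tilde m}$.

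The main obstacle is to verify the closed-path property $\tilde v_p = u$. In $\mathcal{D}^{n_{k_l}}$ the equality $v^{n_{k_l}}_p = u^{n_{k_l}}$ translates into a $\sim$-relation between the extended word and the significative part of $u^{n_{k_l}}$, which is itself witnessed by equality of follower sets and, in the Buzzi-Hofbauer setting, by equality of the underlying image intervals. Since these intervals have lengths bounded below by $1/\tilde K$ along the path, their Hausdorff limits under $f$ coincide precisely when the corresponding intervals for $f_{n_{k_l}}$ do, so the witnessing $\sim$-relation passes to $\mathcal{D}$. Combined with the combinatorial stabilization obtained in the diagonal extraction, this gives $\tilde v_p = u$ and the inequality $\sharp\Delta^{u^{n_{k_l}}}_{p,M}(\mathcal{D}^{n_{k_l}})\leq\sharp\Delta^u_p(\mathcal{D})$, which is precisely the convergence required by Definition \ref{defii}.
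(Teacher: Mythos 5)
Your proposal is correct and follows essentially the same route as the paper: the same reliance on Facts 0--3 to localize significative parts and closed paths in a finite alphabet $R_{\tilde m}$, the same transport of paths to $\mathcal{D}$ via the Hausdorff-limit letter correspondence with injectivity recovered from the last letter of each class, and the same Cantor diagonal extraction over triples $(N,K,M)$. (The only quibble is that your ``precisely when'' for the coincidence of limit intervals is an equivalence of which only the trivial direction holds and only that direction is needed.)
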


\begin{proof}
Let  $N,K,M$ be positive integers.  By Facts 0,1,3,  there exists an integer $m$ depending only on $N,K,M,\sup_n \|f_n'\|_\infty$ such that for $k\geq m$ any closed path $\gamma_k$ of length $M'\leq M$  at a vertex  $ \alpha^{n_k}\in\mathcal{D}^{n_k}\cap E_{N,K}$ is given  by a
 $P_{n_k}$-word $A^{n_k}_{-N'}...A_0^{n_k} A_1^{n_k} ....A_{M'}^{n_k} $ with $N'<N$ such that \begin{eqnarray*}\label{eqq}
\alpha^{n_k} \sim A^{n_k}_{-N'}...A_0^{n_k}\sim  A^{n_k}_{-N'}...A_0^{n_k}A_1^{n_k} ...A_{M'}^{n_k},
 \end{eqnarray*}
  where $A_{-N'},...,A_{M'} $ belongs to $R_m$. Up to extract a subsequence one may assume by uniform convergence of $(f_{n_k})_k$ to $f$ that  $  A_{-N'}...A_0$ is an admissible word  and moreover   $A_{-N'}...A_0 A_1...A_{M'} \sim A_{-N'}...A_0$. This last relation defines a closed path $\gamma$ of length $M'$ at the class $\alpha$ of   $ A_{-N'}...A_0$ in $\mathcal{D}$. The function $\phi_k:\Delta^{\alpha^{n_k}}
_{M}(\mathcal{D}^{n_k})\rightarrow \Delta^{\alpha}
_{M}(\mathcal{D})$ mapping $\gamma_k$ to $\gamma$ may be extended to $\Delta^{\alpha^{n_k}}
_{p,M}(\mathcal{D}^{n_k})$ for all $p$ by concatening the $\phi_k$ images of the first returns at $\alpha^{n_k}$ appearing in the decomposition of a closed path in $\Delta^{\alpha^{n_k}}
_{p,M}(\mathcal{D}^{n_k})$. The resulting map, which  takes value in $\Delta^{\alpha}_{p}(\mathcal{D})$,  is  injective. Indeed,   any closed path $\gamma_k$ in $\Delta^{\alpha^{n_k}}
_{p,M}(\mathcal{D}^{n_k})$ is given  by a
 $P_{n_k}$-word $A^{n_k}_{-N'}...A_0^{n_k} A_1^{n_k} ....A_{p}^{n_k} $ with $N'<N$ and with $A_{-N'},...,A_{p} \in R_m$ such that \begin{eqnarray*}\label{eqq}
\alpha^{n_k} \sim A^{n_k}_{-N'}...A_0^{n_k}\sim  A^{n_k}_{-N'}...A_0^{n_k}A_1^{n_k} ...A_{p}^{n_k}.
\end{eqnarray*} 
Then $\phi_k(\gamma_k)$ is the closed path in $\mathcal{D}$ of length $p$ at $\alpha$ whose $q^{th}$ term is  the class of $A_{-N'}...A_0A_1 ...A_{q-1}$ in $\mathcal{D}$ for any $1\leq q\leq p$. Since these classes allow us to determine $A_q$ for $1\leq q\leq p$ two closed paths in $\Delta^{\alpha^{n_k}}
_{p,M}$ with the same image under $\phi_k$ coincide. It follows that 
$$ \sharp \Delta^{\alpha^{n_k}}
_{p,M}(\mathcal{D}^{n_k})\leq \sharp \Delta^{\alpha}
_{p}(\mathcal{D}).$$

Finally by using a Cantor's diagonal argument we may extract a subsequence $(n_{k_l})_l$ such that this holds true for any $N,K,M$ whenever $l$ is large enough, i.e. the Buzzi-Hofbauer diagrams $(\mathcal{D}^{n_{k_l}})_l$  of  $(f_{n_{k_l}})_l$  converge to the Buzzi-Hofbauer diagram $\mathcal{D}$ of $f$.

\end{proof}

\subsection{Isomorphism Theorem}\label{isom}

Let $f$ be a $C^1$ interval map and $P$ be a natural partition of $f$.
The symbolic dynamics  extends the dynamics on the interval as follows. For any  $A=(A_n)_{n\in \mathbb{Z}}\in \Sigma(f,P)$ we let $\pi_0(A):=\bigcap_{k\in\mathbb{N}}\overline{\bigcap_{l=0}^kf^{-l}A_l}$. Since $f$ is monotone on each element of $P$ the set $\bigcap_{l=0}^kf^{-l}A_l$ is an interval for all $k\in\mathbb{N}$. In particular  $\pi_0(A)$ is a point or  a compact non trivial interval ; but this last possibility occurs only for a countable set of elements $(A_n)_{n\in \mathbb{N}}$. Therefore there is a Borel   subset of $\Sigma(f,P)$ with full $\mu$-measure for any $\sigma$-invariant measure $\mu$ with positive entropy such that  the restriction of  $\pi_0$ to this subset defines  a Borel map (in the following this Borel map will be also denoted by $\pi_0$).

  We also consider the projection $\pi_1:\Sigma(\mathcal{D})\rightarrow \Sigma(f,P)$ defined by $\pi_1((\alpha_n)_n)=B_n$ where $B_n$ is the last letter of the word $\alpha_n$. 
  
  We recall now the Isomorphism Theorem for $C^{1+\alpha}$ interval maps obtained in \cite{Bur} based on previous works of J.Buzzi \cite{Buz1} :

\begin{thm}\cite{Bur}\label{encor}
Let $f$ be a $C^{r}$ map of the interval with $r>1$.
The Borel map $\pi:=\pi_0\circ\pi_1:\Sigma(\mathcal{D})\rightarrow [0,1]$ induces a bijection, preserving the entropy, between ergodic invariant measures  with positive entropy of  $(\Sigma(\mathcal{D}),\sigma)$ and $([0,1],f)$.
\end{thm}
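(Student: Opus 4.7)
The plan is to prove the two-step factorization $\pi=\pi_0\circ\pi_1$ by handling the Buzzi-Hofbauer extension $\pi_1:\Sigma(\mathcal{D})\to\Sigma(f,P)$ and the symbolic coding $\pi_0:\Sigma(f,P)\to[0,1]$ separately, in each case controlling the ``bad set'' where injectivity could fail.

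For $\pi_1$, I would first verify that it is a topological factor map from the definition of the graph $\mathcal{D}$: an edge $\alpha\to\beta$ is witnessed by some admissible word $A_{-n}\ldots A_0A_1$ with $\alpha\sim A_{-n}\ldots A_0$ and $\beta\sim A_{-n}\ldots A_0A_1$, so reading off the last letter commutes with $\sigma$. Conversely, given a two-sided admissible sequence $(A_n)_{n\in\mathbb{Z}}\in\Sigma(f,P)$, I would build a lift by assigning to each $n$ the equivalence class $\alpha_n$ of the finite word $A_{-N(n)}\ldots A_n$, where $N(n)$ is the minimal integer such that the follower set stabilizes. The key lemma, standard in Hofbauer theory, is that this lift is well-defined, $\sigma$-equivariant, and unique modulo the countable set of sequences whose significative parts stay bounded, which carries zero mass under any measure with positive entropy.

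For $\pi_0$, the content lies in showing injectivity off a null set for every positive entropy ergodic measure. By construction $\pi_0((A_n)_n)$ is either a point or a non-trivial compact interval, and the second alternative occurs for at most countably many sequences (the collection of such intervals has pairwise disjoint $f^n$-images, so their total length is bounded). Here the $C^r$ hypothesis with $r>1$ enters decisively: by Ruelle's inequality and the Ledrappier-Young arguments applied to $C^{1+\alpha}$ interval maps, any ergodic measure with positive entropy has positive Lyapunov exponent, and one uses the Koebe/Mañé distortion estimates on the natural partition (whose monotone branches are separated by critical points) to show that $\bigcap_l f^{-l}A_l$ shrinks to a point almost surely. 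Thus $\pi_0$ restricts to a Borel bijection between ergodic positive-entropy measures on $\Sigma(f,P)$ and on $[0,1]$.

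Combining both steps yields a bijection at the level of ergodic positive entropy measures. Entropy preservation then follows from Abramov/Rokhlin: the fiber of $\pi$ is at most countable almost everywhere (one sequence in $\Sigma(\mathcal{D})$ per orbit point, essentially by unicity of the significative representation), so $h(\sigma,\hat{\mu})=h(f,\mu)$ for any lifted ergodic $\hat{\mu}$. The principal obstacle I expect is the injectivity of $\pi_0$ on a full measure set, because it requires transferring the positive Lyapunov exponent (obtained from entropy via Ruelle's inequality, which needs $r>1$) into a genuine contraction of cylinders, and this contraction can fail arbitrarily close to the critical set; the remedy is to use the Buzzi-type argument that positive entropy orbits avoid the critical set with controlled frequency, so that the distortion along typical symbolic sequences remains bounded and the cylinder intersections degenerate to points.
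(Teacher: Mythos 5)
Bear in mind that the paper itself does not prove Theorem \ref{encor}: it is imported verbatim from \cite{Bur} (which builds on \cite{Buz1}), and the only new content here is the remark that the argument of \cite{Bur} goes through for an arbitrary natural partition. So your sketch can only be measured against the cited proof, whose mechanisms are nonetheless visible in Sections 3.3--3.4 of the present paper. Your two-step factorization through $\pi_1$ and $\pi_0$ is the right architecture, and for $\pi_0$ you do state, in passing, the correct argument: the sequences $A$ for which $\pi_0(A)$ is a nontrivial interval form a countable $\sigma$-invariant family (by disjointness of the associated intervals), the points with ambiguous itinerary lie on the countable invariant union of boundary orbits, and a countable invariant set is null for every ergodic measure of positive entropy. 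The extra layer you add on top of this --- Ruelle's inequality plus Koebe/Ma\~{n}\'{e} distortion to force cylinders to shrink almost surely --- is both unnecessary and not available at this regularity (Koebe-type control requires more than $C^{1+\alpha}$, e.g.\ Schwarzian or cross-ratio hypotheses), and it mislocates where the hypothesis $r>1$ is actually used.

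The genuine gap is in the $\pi_1$ step. You dismiss the failure of liftability as ``the countable set of sequences whose significative parts stay bounded.'' This is wrong on both counts: bounded significative parts is precisely the \emph{good} (liftable) situation, and the set of itineraries that do \emph{not} lift --- those for which the follower sets $fol(A_{-m}\ldots A_0)$ never stabilize, equivalently $r_N(A)<+\infty$ for every $N$ in the notation of Section 3.4 --- is in general uncountable, so no countability argument can discard it. The entire substance of the Hofbauer--Buzzi--Burguet theory, and the place where $r>1$ genuinely enters, is the proof that an ergodic measure carried by this non-liftable (``shadowed at all depths'') set must have small entropy: for finitely many monotone branches this is Hofbauer's zero-entropy argument, and in the $C^r$ setting it rests on the bound $C(r)\|f^{(r)}\|_\infty l^{1/(r-1)}$ for the number of critical monotone branches on which $|f'|$ exceeds $l$ (Lemma 4.1 of \cite{DB}), the same estimate that drives Proposition \ref{pf}. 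Without this step you get neither surjectivity of $(\pi_1)_*$ onto positive-entropy ergodic measures nor, therefore, the claimed bijection; as written, the hardest part of the theorem is asserted rather than proved.
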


\begin{rem}
In \cite{Bur}, \cite{Buz1} the authors consider  the particular natural partition given by the set of connected components of  $[0,1]\setminus D(f)$, where  $D(f)$ is the set of points which do not belong to the interior of any strictly monotone branch of $f$. However the proof  of the Isomorphism Theorem applies straightforwardly to general natural partitions. Here we need to work with general natural partitions because  Lemma \ref{ppp} is false for the above particular choice used in \cite{Bur}, \cite{Buz1}. 
\end{rem}

\subsection{Bound of the entropy at infinity}

We first prove   the following  refinement of Proposition 3 of \cite{BuRu} :
\begin{lem}\label{adapt}
Let $(f_n)_{n\in \mathbb{N}}$ be  a sequence of $C^1$ interval maps converging in the $C^1$ topology.  For any integer $n$ we let $\mathcal{D}^n$ be the Buzzi-Hofbauer diagram associated to some natural partition $P_n$ of $f_n$.

Let  $(\xi_n)_{n}$ be a sequence of ergodic $\sigma$-invariant  measures on  $\Sigma(\mathcal{D}^n)$ such that :
\begin{itemize}
\item $h(\sigma,\xi_n)>0$ for all $n$;
\item  $( \xi_n)_n$ goes to infinity.
\end{itemize}

Then for all  $N\in\mathbb{N}$ we have $\lim_{n}\xi_n([\mathcal{D}^n_N])=0$.
\end{lem}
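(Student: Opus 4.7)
My plan is a proof by contradiction. Suppose that for some $N\geq 1$ and some $\delta>0$, along a subsequence (which I still index by $n$) one has $\xi_n([\mathcal{D}^n_N])\geq \delta$. Applying the "goes to infinity" hypothesis to $E_L=E_{N,K}$ gives $\lim_n\xi_n([E_{N,K}])=0$ for every fixed $K$, whence
$$\xi_n\bigl([\mathcal{D}^n_N\setminus E_{N,K}]\bigr)\ \geq\ \delta/2\qquad\text{for }n\geq n_0(K).$$
By definition the vertices in $\mathcal{D}^n_N\setminus E_{N,K}$ are exactly those $\alpha$ whose significative part $A_{-N'}\ldots A_0$ has length $N'+1\leq N$ and whose associated stratum $I_\alpha:=f_n^{N'+1}\bigl(\bigcap_{i=0}^{N'}f_n^{i-N'}A_{-i}\bigr)$ has Lebesgue length $L(\alpha)<1/K$. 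The goal is to bound the left-hand side by a quantity tending to $0$ as $K\to\infty$ uniformly in $n$, contradicting the displayed lower bound.

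The first step is to transport the problem to $[0,1]$ by pushing $\xi_n$ forward under $\pi_0\circ\pi_1$ to an $f_n$-invariant probability measure $\mu_n$ on $[0,1]$. This is legitimate because $\pi_0$ is multi-valued only on countably many two-sided $P_n$-sequences (those for which $\bigcap_k\overline{\bigcap_{l=0}^k f_n^{-l}A_l}$ is a non-trivial interval), and $\xi_n$ is non-atomic as an ergodic measure with $h(\sigma,\xi_n)>0$. Each $\alpha\in\mathcal{D}^n$ gives a stratum $I_\alpha\subset[0,1]$ of length $L(\alpha)$, and the Hofbauer-tower structure implies that for each fixed significative-part length $N'\in\{0,\ldots,N-1\}$ the strata $I_\alpha$ with that length are pairwise disjoint. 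Consequently each point of $[0,1]$ lies in at most $N$ of the strata indexed by $\mathcal{D}^n_N$, and
$$\sum_{\alpha\in\mathcal{D}^n_N}L(\alpha)\ \leq\ N.$$

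The main obstacle is to convert this Lebesgue estimate into a uniform bound on the $\mu_n$-mass (equivalently, the $\xi_n$-mass) of the short-stratum vertices; the naive inequality $\xi_n([\alpha])\leq \mu_n(I_\alpha)$ is not by itself quantitative enough. This is where $h(\sigma,\xi_n)>0$ enters in an essential way, as in the proof of Proposition 3 of \cite{BuRu}: lifting $\mu_n$ to a measure on the Hofbauer tower and exploiting the derivative cocycle $\log|f_n'|$ together with invariance, one controls the density of $\mu_n$ on each stratum by its Lebesgue length, the comparison constant depending only on $\sup_n\|f_n'\|_\infty$ (finite by the $C^1$ convergence $f_n\to f$). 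Once this uniform density control is in place, summing over $\alpha\in\mathcal{D}^n_N\setminus E_{N,K}$ and using $\sum L(\alpha)\leq N$ yields a bound
$$\xi_n\bigl([\mathcal{D}^n_N\setminus E_{N,K}]\bigr)\ \leq\ \Phi(N,K)$$
with $\Phi(N,K)\to 0$ as $K\to\infty$, independently of $n$. Choosing $K$ large enough that $\Phi(N,K)<\delta/2$ contradicts the earlier lower bound and completes the argument.
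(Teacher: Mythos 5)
Your overall skeleton is the right one and matches the paper's: fix $N$, split $\mathcal{D}^n_N$ into $\mathcal{D}^n_N\cap E_{N,K}$ (killed by the ``goes to infinity'' hypothesis) and $\mathcal{D}^n_N\setminus E_{N,K}$, and show the latter has small $\xi_n$-mass uniformly in $n$ once $K$ is large. The gap is in how you bound that second piece. Your key claim --- that one ``controls the density of $\mu_n$ on each stratum by its Lebesgue length, the comparison constant depending only on $\sup_n\|f_n'\|_\infty$'' --- is not justified and is false in general. An ergodic measure with positive entropy for an interval map typically is singular with respect to Lebesgue measure (its dimension is $h(\mu)/\lambda(\mu)$, usually $<1$), so it charges sets of arbitrarily small Lebesgue measure with definite mass; no inequality of the form $\mu_n(I_\alpha)\leq C\,L(\alpha)$ can hold with a constant depending only on $\|f_n'\|_\infty$. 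Invariance of $\mu_n$ and boundedness of the cocycle $\log|f_n'|$ give you no such absolute continuity. (A secondary problem: the strata $I_\alpha$ of vertices with the same significative-part length need not be pairwise disjoint --- distinct follower sets can be nested or overlapping intervals --- so even the bound $\sum_{\alpha\in\mathcal{D}^n_N}L(\alpha)\leq N$ is unjustified.)

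The paper closes exactly this gap by using the hypothesis $h(\sigma,\xi_n)>0$ in a different way, through the Lyapunov exponent rather than through Lebesgue measure. It introduces the intermediate sets $\mathcal{D}^n_{N,m}$ of vertices whose significative part $A_{-N'}\ldots A_0$ satisfies $\sup_{x}|(f_n^{N'+1})'(x)|>1/m$ on the pullback $\bigcap_{i=0}^{N'}f_n^{-i}A_{-N'+i}$. If $\xi_n([\mathcal{D}^n_N\setminus\mathcal{D}^n_{N,m}])\geq a$ for large $m$, the Birkhoff ergodic theorem applied to $\log|f_n'|$ forces $\lambda_{\mu_n}\leq\log^+\|f_n'\|_\infty-\frac{a\log m}{2N}<0$, and then Ruelle's inequality together with the Isomorphism Theorem gives $h(\sigma,\xi_n)=h(f_n,\mu_n)=0$, a contradiction; this is the quantitative mass bound you were missing, and it is an orbitwise derivative estimate, not a density estimate. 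A separate uniform-continuity argument (uniform in $n$ by $C^1$ convergence) then shows $\mathcal{D}^n_{N,m}\subset E_{N,K}$ for a suitable $K=K(m)$, i.e.\ large derivative somewhere on the pullback forces $L(\alpha)\geq 1/K$. If you replace your density step by this two-step comparison (small $L(\alpha)$ $\Rightarrow$ uniformly small derivative $\Rightarrow$ small total mass via Ruelle), the rest of your argument goes through.
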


\begin{proof}
 For any  $N,n,m$ we denote by 
$\mathcal{D}^n_{N,m}$ the subset of $\mathcal{D}_N^n$ given  by irreducible $P_n$-words $A_{-N'}...A_{0}$ (note that $N'<N$) with $$\sup_{x\in \bigcap_{i=0}^{N'} f_n^{-i}A_{-N'+i}}|(f_n^{N'+1})'(x)|>1/m.$$ Obviously $\mathcal{D}^n_{N,m}\subset \mathcal{D}^n_{N,m'}$ for $m'>m$. We fix now $N$. \\

\underline{\textit{Step 1:}} $\xi_n([\mathcal{D}_N^n\setminus\mathcal{D}^n_{N,m}])\xrightarrow{m\rightarrow +\infty}0$ uniformly in $n$.\\
Assume by contradiction that for infinitely many $m$ there exists $n=n(m)$ with  $\xi_n([\mathcal{D}_N^n\setminus\mathcal{D}^n_{N,m}])>a>0$. 
Let $\mu_n=\pi^* \xi_n$ the induced $f_n$-invariant ergodic measure on $[0,1]$ and $\lambda_{\mu_n}$ its Lyapunov exponent. By the ergodic thereom one easily  gets that 
$$\lambda_{\mu_n}\leq \log^+\|f_n'\|_\infty-\frac{a\log m}{2N}.$$
Therefore by choosing $m > \max(1,\sup_{n}\|f_n'\|_{\infty})^{2N/a}$ we conclude that the Lyapunov exponent of   $\mu_n$ is negative and thus its entropy is zero by Ruelle's Inequality, $h(f,\mu_n)\leq \max(\lambda_{\mu_n},0)$. We get a contradiction with our first assumption $h(\sigma,\xi_n)>0$ as we have by the Isomorphism Theorem (Theorem \ref{isom}),  $h(f,\mu_n)=h(\sigma,\xi_n)$.  \\

\underline{\textit{Step 2:}} for all $m$ there exists $K$ with $\bigcup_n\mathcal{D}^n_{N,m}\subset E_{N,K}$.\\ Observe that
for any irreducible $P_n$ word $A_{-N'}...A_{0}$  representing $\alpha\in \mathcal{D}^n_{N,m}$ there exists  $z\in \bigcap_{i=0}^{N'} f_n^{-i}A_{-N'+i}$ with  $|
(f_n^{N'+1})'(z)|>1/m$. There is a  positive integer $K'$   such that  $|(f_n^{N'+1})'(t)|>1/2m$ for any $t$ in the $1/K'$-neighborhood of $z$ in $[0,1]$ (in particular this neighborhood  is entirely contained in $\bigcap_{i=0}^{N'} f_n^{-i}A_{-N'+i}$).  We may also choose $K'$ independent of $z$ by uniform continuity of $f_n$ but also  of $n$ by $C^1$ convergence of $ (f_n)_n$. Consequently $L(\alpha)\geq \frac{1}{2mK'}$ and thus $\alpha$ belongs to $E_{N,K}$ for $K=2mK'$.\\

\underline{\textit{Step 3:}} Conclusion.\\
Let $\epsilon>0$. By Step 1 we may find $m$ so large that for all $n$ we have  $\xi_n([\mathcal{D}_N^n\setminus\mathcal{D}^n_{N,m}])<\epsilon/2$. Then there exists by Step 2 an integer $K=K(m)$ such that  for all $n$ we have $\mathcal{D}^n_{N,m}\subset E_{N,K}$ and therefore we obtain for all $n$:
$$\xi_n([\mathcal{D}_N^n\setminus E_{N,K}]) \leq  \xi_n([\mathcal{D}_N^n\setminus\mathcal{D}^n_{N,m}])<\epsilon/2.$$
Finally, as  $(\xi_n)_n$ goes to infinity,  there exists $n_0$ such that for all $n>n_0$ we have 
  $$\xi_n([ E_{N,K}])<\epsilon/2$$ and thus 
   $$\xi_n([\mathcal{D}_N^n])\leq \xi_n([\mathcal{D}_N^n\setminus E_{N,K}]) +\xi_n([ E_{N,K}])<\epsilon.$$
\end{proof}

The entropy of a sequence of ergodic $\sigma$-invariant measures going to infinity may be bounded from above as follows.


\begin{prop}\label{pf}
Let $(f_n)_{n\in \mathbb{N}}$ be  a sequence of $C^1$ interval maps converging in the $C^1$ topology with $\sup_n\|(f_n)^{(r)}\|_\infty<\infty$. For any integer $n$ we let $\mathcal{D}^n$ be the Buzzi-Hofbauer diagram associated to some natural partition $P_n$ of $f_n$.

Let  $(\xi_n)_n$ be a  sequence of ergodic $\sigma$-invariant measures on $\Sigma(\mathcal{D}^n)$ such that $\lim_{n}\xi_n([\mathcal{D}^n_N])=0$ for all  $N\in\mathbb{N}$. 

Then  we have for any weak limit $\mu:=\lim_k\mu_{n_k}$ of $(\mu_n)_n:=(\pi^*\xi_n)_n$ :

 $$\limsup_{k} h(f_{n_k},\mu_{n_k})\leq \frac{\int\log^+ |f'|d\mu}{r}.$$
\end{prop}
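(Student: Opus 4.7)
The plan is to combine the Isomorphism Theorem (Theorem \ref{encor}) with a Yomdin-type reparametrization argument on the interval, using the fact that vanishing $\xi_n$-mass on $[\mathcal{D}^n_N]$ geometrically encodes long monotonic histories on $[0,1]$.

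By Theorem \ref{encor} we have $h(f_{n_k},\mu_{n_k}) = h(\sigma,\xi_{n_k})$, so we may work on the interval. The hypothesis $\xi_n([\mathcal{D}^n_N])\to 0$ translates as follows: for any fixed $N$, $\mu_n$-almost every point $x$ admits an interval $J^n_N(x)$ of positive length containing $x$ on which $f_n^N$ is a $C^r$ diffeomorphism onto its image, because the vertex above $x$ in the diagram has significative part of length $\geq N$, hence the associated intersection of preimages is a genuine interval rather than a point.

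I fix $N$ large and apply Yomdin's reparametrization lemma to each such monotone branch of $f_n^N$: using the uniform bound $\sup_n\|f_n^{(r)}\|_\infty<\infty$, each branch is covered by at most $C_r\bigl(\sup_{J^n_N(x)}|(f_n^N)'|\bigr)^{1/r}$ many $C^r$-pieces of controlled size, where $C_r$ depends only on $r$ and the uniform $C^r$-bound. Counting $(\epsilon,N)$-separated sets via these reparametrizations yields an estimate of the form
\begin{equation*}
h(f_n,\mu_n) \;\leq\; \frac{1}{rN}\int \log^+|(f_n^N)'|\,d\mu_n \;+\; \xi_n([\mathcal{D}^n_N])\,\log^+\|f_n'\|_\infty \;+\; \varepsilon_N,
\end{equation*}
where $\varepsilon_N\to 0$ as $N\to\infty$ and the middle term bounds the entropy contribution from the region where no long monotone branch is available.

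It remains to pass to the limit. The chain rule $\log^+|(f_n^N)'|\leq \sum_{i=0}^{N-1}\log^+|f_n'|\circ f_n^i$ together with $f_n$-invariance of $\mu_n$ gives $\frac{1}{N}\int\log^+|(f_n^N)'|\,d\mu_n \leq \int\log^+|f_n'|\,d\mu_n$. Along the subsequence $n_k$, the $C^1$-convergence $f_{n_k}\to f$ yields uniform convergence $\log^+|f_{n_k}'|\to\log^+|f'|$, and weak-$*$ convergence $\mu_{n_k}\to\mu$ then gives $\int\log^+|f_{n_k}'|\,d\mu_{n_k}\to\int\log^+|f'|\,d\mu$. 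The hypothesis $\xi_{n_k}([\mathcal{D}^{n_k}_N])\to 0$ kills the middle term for every fixed $N$, and letting $N\to\infty$ absorbs $\varepsilon_N$, yielding the desired bound. The main obstacle will be making Yomdin's reparametrization bound uniform across the sequence (the uniform $C^r$-bound is precisely what this needs) and tracking the error $\varepsilon_N$ carefully enough to verify it is subexponential in $N$, so that it vanishes in the limit.
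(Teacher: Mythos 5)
Your reduction to the interval via Theorem \ref{encor} and your final limit passage are fine, but the heart of your argument --- the displayed inequality bounding $h(f_n,\mu_n)$ by $\frac{1}{rN}\int\log^+|(f_n^N)'|\,d\mu_n$ plus error terms --- is asserted rather than proved, and it is essentially the entire content of the proposition. Two concrete problems. First, a Yomdin-type reparametrization of a monotone branch $J$ of $f_n^N$ produces a number of pieces controlled by derivative \emph{suprema} over the branch at each step, not by the derivative at the $\mu_n$-typical point; converting one into the other would require bounded distortion on the branches $J^n_N(x)$, which is unavailable for $C^r$ maps near critical points. This is precisely why a direct Yomdin argument yields the uniform tail-entropy bound of the form $\log^+\|f'\|_\infty/r$ but not the measure-dependent bound $\frac{1}{r}\int\log^+|f'|\,d\mu$ needed here. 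Second, entropy is computed from orbit segments of length $L\to\infty$ with $n$ and $N$ fixed, so one must control the combinatorial cost of concatenating roughly $L/N$ monotone blocks --- i.e.\ of choosing which branch the orbit enters at each transition --- and this is exactly where the hypothesis $\lim_n\xi_n([\mathcal{D}^n_N])=0$ and the $C^r$ bound must do real work; your sketch never addresses it. (Two smaller points: the vertex over $x$ at time $0$ encodes the \emph{past} itinerary, so a long significative part at time $0$ gives a monotone interval through a backward image of $x$, not a forward monotone interval at $x$; you need the vertex at time $N$ and invariance of $\xi_n$. Also the term $\xi_n([\mathcal{D}^n_N])\log^+\|f_n'\|_\infty$ for the bad set is not justified when the natural partition is countably infinite.)

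The paper's route is genuinely different and avoids these issues: it shows that the hypothesis forces a shadowing property --- a $\mu_n$-typical orbit of length $L$ is shadowed, on disjoint segments of length at least $N$, by at most $L/N$ orbits of critical points --- and then bounds the entropy by the exponential growth rate of the number of admissible tuples of shadowing critical points. The smoothness enters through the fact (Lemma 4.1 of \cite{DB}) that a $C^r$ map $g$ has at most $C(r)\|g^{(r)}\|_\infty\, l^{1/(r-1)}$ critical strictly monotone branches on which $|g'|$ exceeds $l$; this yields $h(f_n,\mu_n)\leq \frac{1}{r-1}\int-\log^-|f_n'|\,d\mu_n+\epsilon$, and only then does Ruelle's inequality convert the exponent $\frac{1}{r-1}$ and the negative part of the log-derivative into the stated bound with $\frac{1}{r}\int\log^+|f'|\,d\mu$. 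To complete your proof you would need either a reparametrization argument that sees the pointwise Lyapunov exponent of $\mu_n$ rather than derivative suprema, or to switch to this critical-point counting scheme.
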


Proposition \ref{pf} is shown by  following straightforwardly the proof of Proposition 4 in \cite{Bur}. In \cite{Bur} the statement concerns a single map, but all the proof works when considering a $C^1$-converging sequence  of $C^r$ maps with uniformly bounded $r$-derivative. Let us sketch the main ideas of this generalization.

For any $A\in \Sigma(f_n,P_n)$ and any $N\in \mathbb{N}$ we let $r_N(A)$ be the least integer $m\in \mathbb{N}\cup\{+\infty\}$ with $m>N$ such that $fol(A_{-m-1}...A_0)\neq fol(A_{-m}...A_0)$. This last inequality implies that there exist a point $y\in \partial A_{-m-1}$ which shadows the piece of orbit $A_{-m}...A_0$, i.e.  $f_n(y)\in \bigcap_{l=0}^n f_n^{-l}A_{-m+l}$.

Then, a sequence $(\nu_n)_n\in \prod_m\mathcal{M}(\Sigma(f_n,P_n),\sigma)$ is said to satisfy the \textbf{shadowing property} when for all integers $N$ we have $\lim_n\nu_n(r_N<+\infty)=1$. It follows easily from the definitions that if   $(\xi_n)_{n}$ is a sequence of $\sigma$-invariant ergodic measures on  $\Sigma(\mathcal{D}^n)$ with  $\lim_{n}\xi_n([\mathcal{D}^n_N])=0$, then the sequence $(\pi_1^*\xi_n)_n$ satisfy the shadowing property. 

We consider now a sequence $(\nu_n)_n\in \prod_m\mathcal{M}(\Sigma(f_n,P_n),\sigma)$ satisfying the shadowing property and we explain how to bound the entropy of $(\mu_n)_n:=(\pi_0^*\nu_n)_n$.  For any $N$, a typical orbit of length $L$ for $\mu_n$  with $n$ large enough may be shadowed at almost any time by at most $k\leq \frac{L}{N}$ critical points of $f_n$ on disjoint orbit segments  of length larger than $N$. Then by using combinatorial arguments  one may bound the entropy of $\mu_n$  by the exponential growth rate in $L$ of the number of such  possible $k$-uples of critical points. This last rate may be bounded in terms   of the Lyapunov exponent of $\mu_n$ as follows.  If at a starting time of an orbit segment the $|f'_n|$-value of our $\mu_n$-typical orbit is  $l$ then the number of possible shadowing points for this piece is less than $C\|(f_n)^{(r)}\|_\infty l^{\frac{1}{r-1}}$ with $C=C(r)$ depending only on $r$. Indeed for a $C^r$ interval map $g$ the number of  critical strictly monotone branches where $|g'|$ exceeds the value $l$ is at most $C(r)\|g^{(r)}\|_\infty l^{\frac{1}{r-1}}$, see Lemma 4.1 of \cite{DB}. Then  as $\sup_n\|(f_n)^{(r)}\|_\infty<+\infty$ there exists for any $\epsilon>0$ an integer $n_\epsilon$ such that for $n>n_\epsilon$ we have 
$h(f_n,\mu_n)\leq \frac{\int -\log^- |f_n'|d\mu_n}{r-1}+\epsilon$. Together with Ruelle inequality, $h(f_n,\mu_n)\leq \max(\int \log |f_n'|d\mu_n,0)$, we get for $n>n_\epsilon$ (we may assume $h(\mu_n)>0$)

\begin{eqnarray*}
h(f_n,\mu_n) & \leq & \frac{\int -\log |f_n'|d\mu_n+\int \log^+ |f_n'|d\mu_n}{r-1}+\epsilon;\\
& \leq & \frac{-h(f_n,\mu_n)+\int \log^+ |f_n'|d\mu_n}{r-1}+\epsilon.
\end{eqnarray*}
and thus for $n>n_\epsilon$ we get $$h(f_n,\mu_n) \leq  \frac{\int \log^+ |f_n'|d\mu_n}{r}+\epsilon.$$
We conclude by continuity of the integral in the right member that for any limit  $\mu:=\lim_k\mu_{n_k}$ of $(\mu_n)_n$:
$$\limsup_{k} h(f_{n_k},\mu_{n_k})\leq \frac{\int\log^+ |f'|d\mu}{r}.$$

\subsection{Proof of the Main Theorem for interval maps}
We can now prove the Main theorem for interval maps. Let $(f_n)_n$ be a sequence of $C^r$ interval maps converging in the $C^1$ topology to $f$   with $\sup_n\|(f_n)^{(r)}\|_\infty<+\infty$ such that $\limsup_n h_{top}(f_n)>h_{top}(f)$. We may assume that $\lim_nh_{top}(f_n)$ exists and is finite since we have $\limsup_n h_{top}(f_n)\leq \limsup_nR(f_n)\leq R(f)$. Let $(P_n)_n$ be natural partitions of $(f_n)_n$. By Lemma \ref{part} and Lemma \ref{ppp} we can suppose by taking  a subsequence that there exists a natural partition $P$ of $f$ such that the Buzzi-Hofbauer diagram $\mathcal{D}^n$ associated to $(f_n,P_n)$ converge to the Buzzi-Hofbauer diagram  $\mathcal{D}$ associated to $(f,P)$. According to the Isomorphism Theorem (Theorem \ref{encor}) we have $h(\mathcal{D}^n)=h_{top}(f_n)$ and  $h(\mathcal{D})=h_{top}(f)$. Also by Theorem \ref{finit} one may find finite connected subgraphs $\G_n\subset \mathcal{D}^n$ with   $\lim_nh(\mathcal{D}^n)=\lim_nh(\G_n)>h(\mathcal{D})$. The Main Proposition claims then that the measure $\xi_n$ of maximal entropy of $\G_n$ goes to infinity when $n$ goes to infinity. Then  by Lemma \ref{adapt}  we get $\lim_{n}\xi_n([\mathcal{D}^n_N])=0$ for all $N$ and thus by  Proposition \ref{pf} we have for any weak limit $\mu\in \mathcal{M}(f,[0,1])$ of $(\mu_n)_n=(\pi^*\xi_n)$:

\begin{eqnarray*}
\lim_n h_{top}(f_n)&=&\lim_n h(\mathcal{D}^n);\\
&=&\lim_n h(\G_n);\\
&=& \lim_n h(\xi_n);\\
&=&\lim_n h(\mu_n);\\
&\leq & \frac{\int\log^+ |f'|d\mu}{r};\\
&\leq & \frac{\log^+ \|f'\|_\infty }{r}.
\end{eqnarray*}

Now if $m$ is an integer so large that $R(f)\simeq \frac{\log^+\|(f^m)'\|_\infty}{m}$, we may apply the previous result to the sequence $f_n^m$ and to $f^m$ since we have
\begin{eqnarray*}
\lim_n h_{top}(f_n^m)&=&m\lim_nh_{top}(f_n);\\
&> & mh_{top}(f);\\
&=&h_{top}(f^m).
\end{eqnarray*} 

Then we get 

\begin{eqnarray*}
\lim_n h_{top}(f_n)&=&\lim_n\frac{h_{top}(f_n^m)}{m};\\
&\leq & \frac{\log^+ \|(f^m)'\|_\infty }{mr};\\
& \lesssim & \frac{R(f)}{r}.
\end{eqnarray*}

This concludes the proof of the Main Theorem for interval maps.



\subsection{Proof of the Main Theorem for circle maps}
For circle maps the theorem is proved by reduction to the case of interval maps as follows. In the  assumptions of the Main 
Theorem one may assume that $f$ has positive entropy. Indeed when $f$ has zero entropy the inequality in the Main 
Theorem is just given by Yomdin's inequality (\ref{yomd}). It is well known that a circle map with positive entropy has an 
horseshoe \cite{Misho}, in particular there exist a positive integer $k$ and  an interval $J\neq \mathbb{S}^1$ such that the 
closure of $J$ is contained in the $f^k$-image of the interior of $J$. This still holds for maps $g$ which are $C^0$-close to $f$. For $n$ large enough we  let $p_n\in J$  be a $k$-periodic point of $f_n$. By extracting a subsequence we may 
assume that $(p_n)_n$ is converging to a $k$-periodic point $p$ of $f$. Then the interval  maps $\tilde{f^p}$ and $\tilde{f_n^p}$, obtained from $f^p$ and $f_n^p$ by blowing up the circle at the fixed point  $p$ and $p_n$ respectively, 
are $C^r$ interval maps such that  $(\tilde{f_n^p})_n$ is converging  to $\tilde{f^p}$ in the $C^r$ topology. By 
applying the Main Theorem for interval maps we get

$$\limsup_n h_{top}(\tilde{f_n^p})\leq \max\left(h_{top}(\tilde{f^p}), \frac{R(\tilde{f^p})}{r}\right)$$

It is easily checked that  \begin{eqnarray*}
 h_{top}(\tilde{f_n^p})  =  h_{top}(f_n^p) & = & ph_{top}(f_n); \\
 h_{top}(\tilde{f^p})  =  h_{top}(f^p) & = & ph_{top}(f);\\
 R(\tilde{f^p})  = R(f^p) & = & pR(f);
 \end{eqnarray*} so that 
 
 $$\limsup_n h_{top}(f_n)\leq \max\left(h_{top}(f), \frac{R(f)}{r}\right).$$

\section{Proof of Proposition \ref{z}}
\subsection{Discontinuity at maps with homoclinic tangencies of order $r$}

We sketch now the proof of Proposition \ref{z}. Similar examples have been already built in \cite{Buz0} \cite{Rue} \cite{bure} and we refer to this paper for details on the construction. 
Let $f$ be a $C^r$ interval map with an homoclinic tangency of order $r$ at a repelling fixed point $p$. We denote by $c$ the critical point flat up to order $r$ in the unstable manifold of $p$ with $f^k(c)=p$ for some $k>0$. We perturb $f$ only on a small neighborhood 
$]c-\delta,c+\delta[$  by letting the perturbation $g$ be a sinusoidal of the form $g(x)=a\sin(Nx/\delta)+f(c)$. We may choose  $a=C\delta |f'(p)|^{-l} $ for some constant $C$ to get a $N$-horseshoe for  $f^l$ with $l\gg |\log \delta|$. The entropy of this horseshoe is given by $\frac{\log N}{l}$ whereas to ensure the $C^r$ closeness one may  take  $aN^r=\delta^r/l$. In this way we obtain $g_l$ going   to $f$ in the $C^r$ topology such that  

\begin{eqnarray*}
h_{top}(g_l)& \geq & \frac{ \log N}{l};\\
&\geq & \frac{\log(\delta^r/al)}{rl};\\
&\geq &  \frac{\log\left(\delta^{r-1}|f'(p)|^l/Cl\right)}{rl};\\
&\geq & \frac{\lambda(p)}{r}+o(1/l).
\end{eqnarray*}

By lower semicontinuity of the entropy we finally get:

$$\limsup_l h_{top}(g_l)\geq \max\left(h_{top}(f),\frac{\lambda(p)}{r}\right).$$

\begin{appendix}
\section*{Appendix}

As a consequence of Misiurewicz's result (\ref{mi95}) we give here a short proof of the following theorem.

\begin{thmm}\cite{Iw}
For any real number $r>1$, the entropy is  continuous on the set $D^r([0,1])$ of $C^r$ interval maps with no critical point flat up to order $r$.  
\end{thmm}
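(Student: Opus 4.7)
The plan is to reduce the statement to Misiurewicz's equality (\ref{mi95}) by establishing that every $f \in D^r([0,1])$ admits a $C^r$-neighborhood entirely contained in some $\mathcal{M}^1_N([0,1])$. Lower semicontinuity of $h_{\text{top}}$ in the $C^0$ topology is already known from Misiurewicz--Szlenk, and $C^r$-convergence implies $C^0$-convergence, so only upper semicontinuity needs attention. Once the containment is proved, applying (\ref{mi95}) with $k = N$ immediately yields
\[ \limsup_{g \stackrel{C^r}{\to} f} h_{\text{top}}(g) \leq \limsup_{g \stackrel{C^1}{\to} f,\ g \in \mathcal{M}^1_N} h_{\text{top}}(g) = h_{\text{top}}(f). \]

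To carry out the reduction, I would first show that $C(f) = \{c_1, \dots, c_m\}$ is finite and each $c_i$ has a well-defined order $s_i \leq \lceil r \rceil$, namely the least integer with $f^{(s_i)}(c_i) \neq 0$. If $C(f)$ were to accumulate at some point $c$, continuity of $f'$ would force $f'(c) = 0$; applying Rolle's theorem between consecutive critical points accumulating at $c$, the second derivative $f''$ would have zeros accumulating at $c$, forcing $f''(c) = 0$; iterating this argument would yield $f^{(j)}(c) = 0$ for every $j \leq \lceil r \rceil$, contradicting by Taylor's formula the non-flatness of $f$ at $c$.

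The next step is to choose disjoint closed neighborhoods $I_i \ni c_i$ on which $f^{(s_i)}$ keeps a constant sign and such that, on the compact complement $K := [0,1] \setminus \bigcup \operatorname{int}(I_i)$, the modulus $|f'|$ exceeds some $\varepsilon > 0$. For $g$ sufficiently close to $f$ in $C^{\lceil r \rceil}$ (in particular in $C^r$), the derivative $g^{(s_i)}$ will have the same constant sign as $f^{(s_i)}$ on each $I_i$, and $|g'|$ will remain bounded away from zero on $K$. Then iterated Rolle inside $I_i$ yields at most $s_i - 1$ critical points of $g$ there, while $g$ has no critical points in $K$; in total $g$ has at most $\sum_i (s_i - 1)$ critical points, so $g \in \mathcal{M}^1_N$ for $N := 1 + \sum_i (s_i - 1)$, completing the reduction. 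The main technical point is precisely this uniform control on the number of monotone pieces under $C^r$ perturbation, and the only subtlety is the non-integer case $r = k + \alpha$, where one must verify carefully that the flatness condition translates into the vanishing of some derivative of order at most $\lceil r \rceil$; this is routine via Taylor's formula with Hölder remainder.
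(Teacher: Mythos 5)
Your overall strategy is the same as the paper's: reduce to Misiurewicz's equality (\ref{mi95}) by showing that all perturbations of $f$ under control lie in a fixed $\mathcal{M}^1_N([0,1])$, with lower semicontinuity supplied by Misiurewicz--Szlenk. For \emph{integer} $r$ your proof of the key containment is correct and arguably more elementary than the paper's: you locate at each critical point $c_i$ the first non-vanishing derivative $f^{(s_i)}(c_i)\neq 0$ (which exists, since otherwise Taylor with Peano remainder would make $c_i$ flat up to order $r$) and use sign-stability plus iterated Rolle, whereas the paper runs a compactness/contradiction argument resting on Lemma 3.2 of \cite{BLY}, which bounds the length of $g(I)$ by $R|I|^r$ whenever $g$ has enough critical points in an interval $I$. (Note that the paper's route yields the slightly stronger conclusion that a $C^1$-neighbourhood intersected with a $C^r$-bounded set suffices, while yours needs closeness of higher derivatives.)

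However, the theorem is stated for every real $r>1$, and your treatment of the non-integer case $r=k+\alpha$ is a genuine gap, not a routine verification. A map in $C^r([0,1])$ is only $C^k$ with $\alpha$-H\"older $k$-th derivative, so $f^{(\lceil r\rceil)}=f^{(k+1)}$ need not exist, and non-flatness up to order $r$ does \emph{not} force one of $f'(c),\dots,f^{(k)}(c)$ to be nonzero: the map $x\mapsto x|x|^{r-1}$ near $0$ is $C^r$, has a critical point at $0$ with all existing derivatives vanishing there and no $(k+1)$-st derivative, yet $f(x)-f(0)=x|x|^{r-1}\neq o(|x|^r)$, so it belongs to $D^r$ while admitting no usable order $s_i$. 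The reason your ``routine Taylor with H\"older remainder'' does not save this is that vanishing of $f',\dots,f^{(k)}$ at $c$ only gives $f(x)-f(c)=O(|x-c|^{r})$, which is strictly weaker than the flatness condition $o((x-c)^r)$; the dichotomy ``flat or some derivative of order $\leq\lceil r\rceil$ is nonzero'' is simply false for non-integer $r$. A second, related, defect is that you require $g$ close to $f$ in $C^{\lceil r\rceil}$: for non-integer $r$ a $C^r$-neighbourhood of $f$ contains maps that are not $\lceil r\rceil$ times differentiable at all, so your perturbation argument does not cover the maps the statement is about. Closing the gap requires a device that counts critical points of a $C^{k,\alpha}$ map without differentiating $k+1$ times; this is exactly the role of the divided-difference estimate of Lemma 3.2 in \cite{BLY} in the paper's proof of its Claim.
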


\begin{proof}
If $f$ belongs to $ D^r([0,1])$ then its critical set is finite and thus $f$ is piecewise monotone. In fact for any $C^r$ bounded set $\mathcal{V}$  there exist an integer $k$ and a $C^1$ neighborhood $\mathcal{U}$ of $f$  such that any $g\in \mathcal{U}\cap \mathcal{V}$ belongs to $\mathcal{M}_k([0,1])$: 

\begin{Claim} \nonumber Let $r>1$, $R>0$ and  $f\in D^r([0,1])$. There exist $\epsilon_0>0$ and a $C^1$ neighborhood $\mathcal{U}$ of $f$ such that any map $g\in 
\mathcal{U}\cap C^r([0,1])$ with  $\|g^{(r)}\|_\infty\leq R$ has at most $r-1$ critical points in any ball of radius $\epsilon_0$, in particular $g$ is in $\mathcal{M}_k^1([0,1])$ with $k=
[r/\epsilon_0]+1$.
\end{Claim}

\begin{proof}[Proof of the Claim]
 Arguing by contradiction we assume that for some $r>1$, $R>0$, there exist $f\in D^r([0,1])$ such that for any $\delta>0$ and for any $\epsilon>0$  there 
 exists a $C^r$ map $g_{\delta,\epsilon}$ which is   $\delta$ $C^1$-close to $f$   with $\|(g_{\delta,\epsilon})^{(r)}\|_\infty<R$ and with $r$ critical points  of 
 $g_{\delta,\epsilon}$ in an interval $I_{\delta,\epsilon}$ of length $\epsilon$. Let $x$ be an accumulation point of the intervals $(I_{\delta,\epsilon})_{\delta,\epsilon}$ when $\delta$ and $\epsilon$ go to zero. 
 In particular for some arbitrarily small $\delta$ and $\epsilon$ the length of $g_{\delta,\epsilon}(I)$ for any $ I\supset I_{\delta,\epsilon}$ has length less than $R|I|
^r$ by Lemma 3.2 of \cite{BLY}.  Thus  as $(g_{\delta,\epsilon})_\delta$ converges uniformly to $f$ when $\delta$ goes to zero, it also holds true for $f(I)$ for any open interval $I$ containing $x$, which easily implies that $f$ has a critical point flat up to order $r$ at $x$ and thus contradict our assumption.
 \end{proof}
 
Finally it follows from  (\ref{mi95})  that the entropy is continuous  at $f\in D^r([0,1])$ for the $C^1$ topology in any  $C^r$ bounded set (in particular for the $C^r$ topology).  
 
 \end{proof}
\end{appendix}

\end{document}